\newtheorem{theorem}{Theorem}[section]
\newtheorem{lemma}[theorem]{Lemma}
\newtheorem{proposition}[theorem]{Proposition}
\newtheorem{corollary}[theorem]{Corollary}
\newtheorem{definition}[theorem]{Definition\rm}
\newtheorem{remark}{Remark}
\newcommand{\T}{\ensuremath{\mathbb{T}}}
\newcommand*{\R}{\ensuremath{\mathbb{R}}}
\renewcommand*{\S}{\ensuremath{\mathcal{S}}}
\newcommand*{\N}{\ensuremath{\mathbb{N}}}
\newcommand*{\Z}{\ensuremath{\mathbb{Z}}}
\newcommand*{\C}{\ensuremath{\mathbb{C}}}
\renewcommand*{\div}{\ensuremath{\mathrm{div\,}}}
\newcommand*{\tr}{\ensuremath{\mathrm{tr\,}}}
\newcommand*{\Id}{\ensuremath{\mathrm{Id}}}
\renewcommand*{\P}{\ensuremath{\mathcal{P}}}
\newcommand*{\Q}{\ensuremath{\mathcal{Q}}}
\newcommand*{\RR}{\ensuremath{\mathcal{R}}}
\begin{document}

\begin{abstract}
We show the existence of continuous periodic solutions of the 3D incompressible Euler
equations which dissipate the total kinetic energy.
\end{abstract}

\title[Dissipative continuous Euler Flows]
{Dissipative continuous Euler Flows}

\author{Camillo De Lellis}
\address{Institut f\"ur Mathematik, Universit\"at Z\"urich, CH-8057 Z\"urich}
\email{camillo.delellis@math.unizh.ch}

\author{L\'aszl\'o Sz\'ekelyhidi Jr.}
\address{Institut f\"ur Mathematik, Universit\"at Leipzig, D-04103 Leipzig}
\email{laszlo.szekelyhidi@math.uni-leipzig.de}

\maketitle

\section{Introduction}

In what follows $\T^3$ denotes the $3$-dimensional torus, i.e. $\T^3 = {\mathbb S}^1\times {\mathbb S}^1 \times
{\mathbb S}^1$. In this note we prove the following theorem.

\begin{theorem}\label{t:main}
Assume $e: [0, 1]\to \R$ is a positive smooth function.
Then there is a continuous vector field $v: \T^3 \times [0, 1]\to \R^3$ and a continuous
scalar field $p:\T^3\times [0,1]\to \R$ which solve the incompressible
Euler equations 
\begin{equation}\label{e:Euler}
\left\{\begin{array}{l}
\partial_t v + \div (v\otimes v) + \nabla p =0\\ \\
\div v = 0
\end{array}\right.
\end{equation}
in the sense of distributions and such that
\begin{equation}\label{e:energy_id}
e(t) = \int |v|^2 (x,t)\, dx\qquad\forall t\in [0, 1]\, .
\end{equation}
\end{theorem}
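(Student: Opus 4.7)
The plan is to construct $v$ as the uniform limit of a sequence of smooth fields $v_q$ which are only approximate Euler solutions, in the sense that they satisfy an Euler--Reynolds system
\begin{equation*}
\partial_t v_q + \div (v_q\otimes v_q) + \nabla p_q = \div R_q,\qquad \div v_q = 0,
\end{equation*}
with a symmetric ``Reynolds stress'' $R_q$ that tends to zero uniformly. At stage $q+1$ we add to $v_q$ a highly oscillatory divergence-free perturbation $w_{q+1}$ of amplitude $\sim\delta_{q+1}^{1/2}$ and frequency $\lambda_{q+1}$, where $\lambda_q$ grows super-exponentially and $\delta_q\to 0$. The energy is tracked by prescribing at each stage the approximate identity $\int_{\T^3}|v_q|^2\,dx \approx e(t)-C\delta_{q+1}$, so that \eqref{e:energy_id} is recovered in the limit.

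The perturbation $w_{q+1}$ is built from stationary Beltrami plane waves. Setting $\rho_q(t)$ to be essentially the energy gap $(e(t)(1-\delta_{q+1})-\int|v_q|^2)/(2\pi)^3$, the main tool is a geometric lemma which decomposes any symmetric matrix field close to $\rho_q\,\Id$ as a linear combination of the rank-one tensors $B_k\otimes\overline{B_k}+\overline{B_k}\otimes B_k$ associated to finitely many admissible Beltrami wave vectors $k\in\Z^3$. Applying it to $\rho_q\Id - R_q$ produces amplitudes $a_k(x,t)$ and we set
\begin{equation*}
w_{q+1}(x,t)=\sum_k a_k(x,t)\,B_k\, e^{i\lambda_{q+1} k\cdot x}+\text{corrector},
\end{equation*}
where the corrector is a lower-order term forcing $\div w_{q+1}=0$ exactly; the Beltrami structure guarantees, crucially, that the leading oscillatory part of $w_{q+1}\otimes w_{q+1}$ reproduces exactly $\rho_q\Id - R_q$ after averaging in $x$, so that the self-interaction cancels the old stress.

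The next stress $R_{q+1}$ then splits into three pieces: the transport error $\partial_t w_{q+1}+v_q\cdot\nabla w_{q+1}$, the Nash-type interaction $w_{q+1}\otimes v_q+v_q\otimes w_{q+1}$, and the high-frequency remainder of the self-interaction $w_{q+1}\otimes w_{q+1}-(\rho_q\Id - R_q)$. Each of these has zero spatial average in $x$ at fixed $t$, and each carries either a high-frequency factor $e^{i\lambda_{q+1} k\cdot x}$ with non-zero $k$ or a derivative falling only on the slowly varying amplitudes. Inverting $\div$ by a bounded right-inverse $\RR$ on mean-zero symmetric tensors and using stationary phase gains factors of $\lambda_{q+1}^{-1}$; choosing the parameters correctly makes each term $\le \delta_{q+2}$, closing the induction.

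The main obstacle, and the part that governs all the quantitative choices, is the simultaneous balancing of the three errors. The transport and Nash contributions want $\lambda_{q+1}$ to be very large relative to spatial derivatives of $v_q$ and $a_k$; the oscillation error, by contrast, picks up higher derivatives of $a_k$ (which themselves depend on derivatives of $R_q$ and hence implicitly on previous $\lambda_{q'}$), so $\lambda_{q+1}$ cannot be taken arbitrarily large either. Hence one has to choose $\delta_q$ and $\lambda_q$ so that $\sum_q \delta_q^{1/2}<\infty$ (giving continuity of the limit $v$), while the mismatch between frequencies at different stages is still beaten by the gain from $\RR$. Once these choices are made, $v_q\to v$ uniformly, $R_q\to 0$ in $C^0$, the corresponding $p_q$ converge to a continuous $p$, and the limit $(v,p)$ is a continuous distributional solution of \eqref{e:Euler} satisfying \eqref{e:energy_id}.
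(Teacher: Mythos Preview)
Your outline captures the broad architecture of the paper's argument---Euler--Reynolds iteration, Beltrami building blocks, geometric decomposition, Leray corrector, $\mathcal{R}$ as a right-inverse of divergence---but it omits precisely the idea that makes the scheme close. The transport error $\partial_t w_{q+1}+v_q\cdot\nabla w_{q+1}$ is \emph{not} controlled by stationary phase alone. With the ansatz you write, $w_{q+1}=\sum_k a_k(x,t)B_k e^{i\lambda_{q+1}k\cdot x}$, the term $v_q\cdot\nabla w_{q+1}$ contains $i\lambda_{q+1}(k\cdot v_q)\,a_k B_k e^{i\lambda_{q+1}k\cdot x}$; applying $\mathcal{R}$ gains only one factor of $\lambda_{q+1}^{-1}$, so this contribution is of order $\|v_q\|_0\|a_k\|_0$, i.e.\ $O(\delta_{q+1}^{1/2})$ rather than $O(\delta_{q+2})$. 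In fact it carries a residual $\lambda_{q+1}^{\alpha}$ from the H\"older norm, so enlarging $\lambda_{q+1}$ makes it worse, not better. This is exactly the obstruction the paper singles out as ``the most important new aspect'' of the construction.

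The paper's remedy is to insert into each amplitude a phase function $\phi_k^{(j)}\bigl(v_q(x,t),\lambda t\bigr)$ built from a partition of unity in velocity space at an \emph{intermediate} scale $\mu$, with $1\ll\mu\ll\lambda$. This phase acts as a discretized Galilean transformation: it is designed so that $\partial_\tau\phi_k^{(j)}+i(k\cdot v_q)\phi_k^{(j)}=O(\mu^{-1})$, which cancels the dangerous $\lambda$-factor in the transport term and leaves a contribution of order $\lambda^{\alpha}/\mu$. The price is that derivatives of the amplitudes now pick up powers of $\mu$, so the oscillation and interaction errors become $O(\mu^2/\lambda^{1-\alpha})$. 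Balancing $\lambda^{\alpha}/\mu$ against $\mu^2/\lambda^{1-\alpha}$ forces $\mu\sim\lambda^{\beta}$ with $\alpha<\beta$ and $\alpha+2\beta<1$ (whence $\alpha<\tfrac13$); this---not the vague competition you describe between ``wanting $\lambda$ large'' and ``not too large''---is the actual trade-off that governs the parameter choices. Without this phase mechanism your iteration does not close.
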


Obviously, if we choose a strictly decreasing function $e(t)$, Theorem \ref{t:main} yields
continuous solutions of the incompressible Euler equations which ``dissipate'' the total kinetic
energy $\frac{1}{2} \int |v|^2 (x,t)\, dx$. This is not possible for $C^1$ solutions: in that case
one can multiply the first equation in \eqref{e:Euler} by $v$ to derive  
\[
\partial_t \frac{|v|^2}{2} + {\rm div} \left(u \left( \frac{|u|^2}{2} + p\right)\right) \;=\; 0\, .
\]
Integrating this last identity in $x$ we then conclude
\begin{equation}\label{e:conservation}
\frac{d}{dt} \int_{\T^3} \frac{|v|^2}{2} (x,t)\, dx \;=\; 0\, .
\end{equation}
Theorem \ref{t:main} shows therefore that this formal computation cannot be justified for distributional
solutions, even if they are continuous.
The pair $(v,p)$ in Theorem \ref{t:main} solves \eqref{e:Euler} in the following sense:
\begin{equation}\label{e:test1}
\int_0^1\int_{\T^3}\left( \partial_t\varphi\cdot v+\nabla\varphi:v\otimes v + p\,  {\rm div}\, \varphi\right)\,dxdt=0
\end{equation}
for all $\varphi\in C_c^{\infty}(\T^3\times (0,1);\R^3)$
and
\[
\int_0^1\int_{\T^3}v\cdot \nabla\psi\,dxdt=0 \qquad \mbox{for all $\psi\in C_c^{\infty}(\T^3\times (0,1))$}.
\]
\begin{remark} 
In the usual definition of weak solution, \eqref{e:test1} is replaced by the same condition for divergence free
test fields: therefore $p$ disappears from the identity. With this alternative definition, for every weak solution $v$ which belongs to $L^2$ 
a corresponding pressure field can then be recovered using
\begin{equation}\label{e:pressure}
-\Delta p = {\rm div}\, {\rm div}\, (v\otimes v)\, .
\end{equation}
$p$ is then determined up to an arbitrary function of $t$: this arbitrariness can be overcome by imposing,
for instance, $\fint p (x,t)\, dx= 0$.
However, as it is well-known, the equation \eqref{e:pressure} and the continuity of $v$ does not guarantee the continuity of $p$.
\end{remark}

\subsection{Onsager's Conjecture} The possibility that weak solutions might dissipate
the total kinetic energy has been considered for a rather long time in the fluid dynamics literature: this
phenomenon goes under the name of ``anomalous dissipation''. In fact, to our knowledge, the existence
of dissipative solutions was considered for the first time by
Lars Onsager in his famous 1949 note about statistical hydrodynamics, see \cite{Onsager}. In that
paper Onsager conjectured that 
\begin{itemize}
\item[(a)] $C^{0,\alpha}$ solutions are energy conservative when $\alpha> \frac{1}{3}$;
\item[(b)] There exist dissipative solutions with $C^{0, \alpha}$ regularity for any $\alpha <\frac{1}{3}$
\end{itemize}
(note that, though Onsager's definition of ``weak solution'' is, strictly speaking, different from the one given
above, it can be easily shown that the two concepts are equivalent).

\medskip

The first part of the conjecture, i.e.~assertion (a), has been shown by Eyink in \cite{Eyink} and by Constantin, E and Titi
in \cite{ConstantinETiti}. The proof of the last paper amounts to give a rigorous justification of the
formal computation sketched above and leading to \eqref{e:conservation}: this is done via a suitable regularization of the equation and some careful commutator
estimates. The second part of the conjecture, i.e.~statement (b), is still widely open. A first result in that direction was
the groundbreaking work of Scheffer \cite{Scheffer93} which proved the existence of a compactly supported nontrivial
weak solution in $\R^2\times \R$. A different construction of the existence of a compactly supported nontrivial
weak solution in $\T^2\times \R$ was then given by Shnirelman in \cite{Shnirelman1}. In both cases the 
solutions are only square summable (as a function of both space and time variables): it is therefore not clear whether there are intervals of time in which
the total kinetic energy is a monotone function (indeed it is not even clear whether these solutions belong
to the energy space $L^\infty_t (L^2_x)$). The first proof of the existence of a solution for which the total
kinetic energy is a monotone decreasing function has been
given by Shnirelman in \cite{Shnirelmandecrease}. Shnirelman's example is only in the energy space
$L^\infty ([0, \infty[, L^2 (\R^3))$. 

\subsection{$h$-principle} Our work \cite{DS1,DS2} showed the existence of dissipative solutions for
which both pressure and velocity are bounded. 
Besides the obvious improvement (and the discovery of quite severe counterexamples to the uniqueness
of admissible solutions, both for incompressible and compressible Euler), in this work we introduced a new
point of view in the subject, highlighting connections to other counterintuitive solutions of 
(mainly geometric) systems of partial differential equations: in geometry these solutions are, according to Gromov, instances of the $h$-principle. 
In particular the Onsager's Conjecture bears
striking similarities with the rigidity and flexibility properties of isometric embeddings of 
Riemannian manifolds, pioneered by the celebrated work of Nash \cite{Nash54}. Indeed, results of the same
flavor as statements (a) and (b) can be proved in the case of isometric embeddings (see for instance \cite{CDSz} and the references therein): in comparing the Onsager's conjecture and these results,
the reader should take into account that, in this analogy, the velocity field of the Euler equations corresponds
to the differential of the embedding in the isometric embedding problem.
All these aspects (and further developments for some PDEs in fluid dynamics inspired by our work) are surveyed in the note \cite{DS3}. See also \cite{Chiodaroli, CFG, Shvydkoy, Szekelyhidi, SzWie, Wiedemann}.

\medskip

The understanding of Nash's construction was in a way a starting point for our approach to Theorem \ref{t:main}.
As in the case of Nash, the solution of \eqref{e:Euler} is generated by an iteration scheme: at each stage of this iteration we produce an ``almost solution'' which solves Euler with an additional error term. We name the resulting ``perturbed'' system of equations Euler-Reynolds system, since the error term has the typical form of the
so-called Reynolds stress in the fluid dynamics literature (see \cite{DS3} for an informal discussion of this point).
This error term
converges to $0$, while the sequence of almost solutions converge to an exact solution, uniformly in $C^0$. 
At each stage the new approximate solution is generated from the previous one by adding some special
perturbations, which oscillate quite fast. In our case the building blocks of these perturbations are Beltrami flows, a special
class of exact oscillatory solutions to the Euler equations. Hence, the final result of the iteration scheme is the superposition of infinitely many (perturbed) and weakly interacting Beltrami flows. Curiously, 
the idea that turbulent flows can be understood as a superposition of Beltrami flows has been proposed
almost 30 years ago in the fluid dynamics literature: see the work of Constantin and Majda \cite{ConstantinMajda}. Indeed,
it was Peter Constantin who suggested to us to try Beltrami flows in a convex integration scheme.

\subsection{Comments on the proof}

Though the proof of Theorem \ref{t:main} shares several similarities with Nash's scheme, 
there are many points where our method departs dramatically from Nash's, due to some
issues which are typical of the Euler equations and are not present for the isometric embeddings. 

{\bf 1)}\, Perhaps the most important new aspect of our scheme is a ``transport term'' which arises, roughly speaking,
as the linearization of the first equation in \eqref{e:Euler}: this term is typical of an evolution equation, whereas, instead, the equations for isometric embeddings are ``static''. At a first glance this transport term makes it impossible to use a scheme like the one of Nash to prove Theorem \ref{t:main}. To overcome this obstruction we need to introduce a phase-function that acts as a kind of discrete Galilean transformation of the (stationary) Beltrami flows, and to introduce an ``intermediate'' scale along each iteration step on which this transformation acts.  

{\bf 2)}\, The scheme introduced by Nash and its wide generalization by Gromov known as convex integration heavily relies on one-dimensional oscillations - the simple reason being that these can be ``integrated'', hence the name convex integration. As already mentioned, the main building blocks of our iteration scheme are Beltrami flows, which are truly three-dimensional oscillations. The issue of going beyond one-dimensional oscillations has been raised by Gromov (p219 of \cite{Gromov}) as well as Kirchheim-M\"uller-\v Sver\'ak (p52 of \cite{KMS02}), but as far as we know, there have been no such examples in the literature so far. In fact, it seems that with one-dimensional oscillations alone one cannot overcome the obstruction in 1).

{\bf 3)}\, A third, more technical, new aspect is the absence of a simple potential to generate solutions of the Euler-Reynolds system: in a sense we cannot simply ``integrate'' Beltrami flows. 
In order to overcome this issue we introduce a ``corrector term'' to the main perturbation. This corrector term is not ``explicit'': it is determined by solving some appropriate elliptic equations. 

{\bf 4)}\, In order to estimate the corrector and show that its contribution is negligible compared to that of the main perturbation, we use a combination of standard Schauder theory and oscillatory integrals estimates. This gives to our proof a ``hard'' PDE flavor compared to the construction of Nash,
which is more on the side of ``soft analysis''.

{\bf 5)}\, As a minor comment we remark that obviously the smoothness of $e$ in Theorem \ref{t:main} can be relaxed, but we do not pursue this issue here. 
Moreover, the same theorem can be proved if we
replace $[0,1]$ with $[0, \infty[$: in this case we require in addition that $e$ and its derivatives
are uniformly bounded and that there is a positive constant $c_0$ with $e\geq c_0$.

{\bf 6)}\, In Theorem \ref{t:main} our aim was to construct continuous weak solutions. In particular we did not address issues concerning the initial-value problem. However, this is a typical way to proceed with problems involving the $h$-principle. 
In general one may distinguish two aspects: a local and a global one. In geometric situations the local one is typically a differential constraint whereas the global one is topological (cf. \cite{Eliashberg}). The flexibility (in other words the lack of uniqueness) that one observes in instances of the $h$-principle is tied to the specifics of the local aspect. Thus, our Theorem \ref{t:main} deals exclusively with the ``local'' aspect for the Euler equations, whereas a possible analogue of the global aspect would be the imposition of an initial data, possibly together with an admissibility condition (as in \cite{DS2}). In subsequent papers we plan to address such ``global'' issues (e.g. initial data, compactly supported ancient solutions, etc).

\subsection{Acknowledgements}
We wish to thank Peter Constantin and Sergio Conti for several very valuable discussions on earlier attempts to prove Theorem \ref{t:main}. Moreover we are grateful to Antoine Choffrut for several comments on earlier versions of the paper, which considerably improved its readability. The first author acknowledges the support of the SFB Grant TR71, the second author acknowledges the support of the ERC Grant Agreement No.~277993 and the support of the Hausdorff Center for Mathematics in Bonn.
 
\section{Setup and plan of the paper}

The proof of Theorem \ref{t:main} will be achieved through an iteration procedure. Along the iteration the maps will be ``almost solutions'' of the Euler equations. To measure ``how far''
a solenoidal field is from being a solution of incompressible Euler we introduce a system of differential
equations which we call Euler-Reynolds system. The name is justified by the fact that the matrix-field
$\mathring{R}$ is a well known object in the theory of turbulence, called ``Reynolds stress'' (cp. with
\cite{DS3} and the references therein). In what follows $\mathcal{S}^{3\times 3}_0$ denotes the vector space
of symmetric trace-free $3\times 3$ matrices.

\begin{definition}\label{d:euler_reynolds}
Assume $v, p, \mathring{R}$ are smooth functions on $\T^3\times [0, 1]$
taking values, respectively, in $\R^3, \R, \S^{3\times 3}_0$. We say that they solve the Euler-Reynolds system if 
\begin{equation}\label{e:euler_reynolds}
\left\{\begin{array}{l}
\partial_t v + \div (v\otimes v) + \nabla p =\div\mathring{R}\\ \\
\div v = 0\, .
\end{array}\right.
\end{equation} 
\end{definition}

We are now ready to state the main proposition of this paper, of which Theorem \ref{t:main}
is a simple corollary.

\begin{proposition}\label{p:iterate}
Let $e$ be as in Theorem \ref{t:main}.
Then there are positive constants $\eta$ and $M$ with the following property.

Let $\delta\leq 1$ be any positive number and $(v, p, \mathring{R})$  a solution of the Euler-Reynolds
system \eqref{e:euler_reynolds} such that
\begin{equation}\label{e:energy_hyp}
\tfrac{3\delta}{4} e(t) \leq e(t) - \int |v|^2 (x,t)\, dx \leq \tfrac{5\delta}{4} e(t) \qquad \forall t\in [0,1]
\end{equation}
and
\begin{equation}\label{e:Reynolds_hyp}
\sup_{x,t}|\mathring{R}(x,t)| \leq \eta \delta\, .
\end{equation}
Then there is a second triple $(v_1, p_1, \mathring{R}_1)$ which solves
as well the Euler-Reynolds system and satisfies the following estimates:
\begin{equation}\label{e:energy_est}
\tfrac{3\delta}{8} e(t) \leq e(t) - \int |v_1|^2 (x,t)\, dx\leq \tfrac{5\delta}{8} e(t) \qquad \forall t\in [0,1]\, ,
 \end{equation}
\begin{equation}\label{e:Reynolds_est}
\sup_{x,t}|\mathring{R}_1(x,t)|\leq \tfrac{1}{2}\eta\delta\, ,
\end{equation}
\begin{equation}\label{e:C^0_est}
\sup_{x,t}|v_1(x,t) - v(x,t)| \leq M \sqrt{\delta}\, 
\end{equation}
and
\begin{equation}\label{e:pressure_est}
\sup_{x,t} |p_1 (x,t) - p (x,t)| \leq M \delta\, .
\end{equation}
\end{proposition}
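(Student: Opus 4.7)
The plan is to construct $v_1 = v + w$ and $p_1 = p + q$, where $w$ is a small, high-frequency perturbation of the form
\[
w(x,t) \;=\; \sum_{k} a_k(x,t)\, W_k\bigl(\lambda\phi_k(x,t)\bigr) + w_c(x,t),
\]
whose self-interaction $w\otimes w$, after averaging out the fast oscillations at a large frequency $\lambda$, produces a tensor of the form $\rho(x,t)\,\Id - \mathring{R}(x,t)$. The scalar $\rho$ will be tuned so that the new kinetic energy exactly halves the gap in \eqref{e:energy_hyp}, the trace-free part of $\langle w\otimes w\rangle$ supplies the cancellation of $\mathring{R}$, the $W_k$ are Beltrami modes (stationary solutions of Euler supported on a fixed finite set of wave vectors $\{\xi_k\}$), and $w_c$ is a small corrector needed to enforce $\div w = 0$. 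The new Reynolds stress $\mathring{R}_1$ will be $\RR$ (a right-inverse of $\div$ on trace-free symmetric tensors) applied to everything left behind.

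The first substantive step is to mollify $(v,\mathring{R})$ at a spatial scale $\ell$ so that derivatives are controlled, and to defuse the transport-type obstruction (point (1) of the introduction) by partitioning $[0,1]$ into subintervals of length $\mu^{-1}$ with $1\ll\mu\ll\lambda$. On each subinterval centered at $t_0$ I would perform a discrete Galilean boost: freeze the coarse velocity $v_\ell(\cdot,t_0)$ and choose the phases so that $(\partial_t + v_\ell\cdot\nabla)\phi_k = 0$ to leading order, e.g.\ $\phi_k(x,t) = \xi_k\cdot\bigl(x - v_\ell(x_0,t_0)(t-t_0)\bigr)$. In this way $(\partial_t + v\cdot\nabla)$ applied to $e^{i\lambda\phi_k}$ is only of order $\mu\lambda$ rather than $\lambda$, so the stationary Beltrami oscillations are effectively transported by the coarse flow. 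The amplitudes would be $a_k = \gamma_k\bigl(\rho\,\Id - \mathring{R}\bigr)$, where the $\gamma_k$ come from a \emph{geometric decomposition lemma} of the type "every symmetric matrix near $\Id$ can be written as a positive combination $\sum_k \gamma_k^2\,\xi_k\otimes\xi_k$ over the chosen set of Beltrami wave vectors"; the smallness hypothesis \eqref{e:Reynolds_hyp} with $\eta$ sufficiently small keeps $\rho\,\Id - \mathring{R}$ inside the validity neighborhood, while \eqref{e:energy_hyp} fixes the size of $\rho$. The corrector $w_c$ is not explicit: it solves an elliptic problem involving $\div w_o$, and is of size $\lambda^{-1}$ relative to the main perturbation, as guaranteed by Schauder estimates.

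Substituting $v_1 = v+w$ into \eqref{e:euler_reynolds} produces four families of error terms: the transport-oscillation term $(\partial_t + v\cdot\nabla)w$, the purely oscillatory remainder of $w\otimes w$ (everything that did not average into $\rho\Id - \mathring{R}$), the corrector-related terms, and the mollification error $\div\bigl[(v-v_\ell)\otimes v + v\otimes(v-v_\ell) + \mathring{R} - \mathring{R}_\ell\bigr]$. Each is an oscillatory tensor at frequency $\lambda$, modulated by slower frequencies $\ell^{-1}$ and $\mu$, whose image under $\RR$ gains a factor $\lambda^{-1}$ through stationary-phase / integration-by-parts manipulations. The main obstacle, and the analytic heart of the proof, is to carry out this quantitative gain in $C^0$: combining Schauder estimates for $w_c$ with sharp oscillatory-integral bounds for $\RR$ to show that $\|\mathring{R}_1\|_0$ is at most $\lambda^{-1}$ times a polynomial loss in $\ell^{-1}$ and $\mu$. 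Once this is established, the proposition follows by choosing $\ell$, $\mu$, $\lambda$ as suitable powers of $\delta$, with $\lambda$ large enough to dominate both the mollification loss $\ell^{-N}$ and the Galilean loss $\mu^{-N}$, and with $M$, $\eta$ absolute constants determined by the finite Beltrami family and the geometric lemma. The bounds \eqref{e:C^0_est}--\eqref{e:pressure_est} then follow from the explicit sizes of $w$ and $q$, \eqref{e:energy_est} from the choice of $\rho$, and the crucial \eqref{e:Reynolds_est} from the $\lambda^{-1}$ gain of $\RR$.
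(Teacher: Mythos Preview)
Your outline captures the architecture of the paper: Beltrami building blocks, the geometric decomposition lemma, the Leray corrector $w_c$, the right-inverse $\mathcal{R}$ of divergence, and the transport/oscillation/error splitting of $\mathring{R}_1$. Two implementation points differ from what the paper actually does.

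The paper does not mollify: since $(v,p,\mathring{R})$ is already smooth, all its derivatives are bounded a priori, and the constants $C$ in the error estimates are simply allowed to depend on $v,\mathring{R}$. Correspondingly, $\mu$ and $\lambda$ are \emph{not} chosen as powers of $\delta$. One fixes a H\"older exponent $\alpha$ and sets $\mu\sim\lambda^\beta$ with $\alpha<\beta$ and $\alpha+2\beta<1$; then $\lambda$ is taken large enough, depending on $v,\mathring{R}$ and $\delta$, to drive each error below $\eta\delta/2$. The constants $\eta$ and $M$ depend only on $e$ (via $\min_t e(t)$ and $\max_t e(t)$) and on the radius $r_0$ in the geometric lemma.

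More substantively, the transport obstruction is handled by a different mechanism. Rather than partitioning time and freezing the coarse velocity on each subinterval, the paper introduces a partition of unity \emph{in velocity space}: smooth bumps $\alpha_l(\mu v)$ indexed by $l\in\mathbb{Z}^3$, each supported where $|v-l/\mu|<1/\mu$, and attaches to each the phase factor $e^{-i\lambda(k\cdot l/\mu)t}$. On the support of $\alpha_l(\mu v)$ the transport derivative of $e^{i\lambda k\cdot x - i\lambda(k\cdot l/\mu)t}$ equals $i\lambda k\cdot(v-l/\mu)=O(\lambda/\mu)$, which is precisely the gain you need. Your proposed scheme with a \emph{single} Galilean shift $v_\ell(x_0,t_0)$ per time interval would not achieve this globally on $\mathbb{T}^3$, since $v$ varies in $x$: you would need an additional spatial localization or a genuinely Lagrangian phase (as in the later H\"older-regularity papers). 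The velocity-space partition sidesteps this at the modest cost of making the amplitudes $a_k$ depend on $v(x,t)$ through the composite $\alpha_l(\mu v)$, which is why the estimates in the paper track powers of $\mu$ coming from the chain rule.
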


As already mentioned, Theorem \ref{t:main} follows immediately from Proposition \ref{p:iterate}.

\begin{proof}[Proof of Theorem \ref{t:main}]
We start by setting $v_0=0$, $p_0=0$, $\mathring{R}_0=0$ and $\delta := 1$. We then
apply Proposition \ref{p:iterate} iteratively to reach a sequence $(v_n, p_n, \mathring{R}_n)$ which
solves \eqref{e:euler_reynolds} and such that
\begin{eqnarray}
\frac{3}{4}\frac{e(t)}{2^n} \leq e(t) - \int |v_n|^2 (x,t)\, dx&\leq& \frac{5}{4}\frac{e(t)}{2^n}\qquad \mbox{for all $t\in [0,1]$}\label{e:energy_final}\\
\sup_{x,t}|\mathring{R}_n(x,t)|&\leq& \frac{\eta}{2^n}\\
\sup_{x,t}|v_{n+1}(x,t) - v_n(x,t)| &\leq& M \sqrt{\frac{1}{2^n}}\\
\sup_{x,t} |p_{n+1} (x,t)-p_n (x,t)| &\leq& \frac{M}{2^n}\, .
\end{eqnarray}
Then $\{v_n\}$ and $\{p_n\}$ are both Cauchy sequences in $C(\T^3\times[0,1])$ and converge uniformly to two continuous
functions $v$ and $p$. Similarly $\mathring{R}_n$ converges uniformly to $0$. Moreover, by \eqref{e:energy_final}
\[
\int_{\T^3} |v|^2 (x,t)\, dx = e(t) \qquad \forall t\in [0,1]\, .
\]
Passing into the limit in \eqref{e:euler_reynolds} we therefore conclude that
$(v,p)$ solves \eqref{e:Euler}.
\end{proof}

\subsection{Construction of $v_1$} The rest of the paper will be dedicated to prove Proposition \ref{p:iterate}.
The construction of the map $v_1$ consists of adding two perturbations to $v$:
\begin{equation}\label{e:define_w}
v_1 = v + w_o + w_c =:v+w.
\end{equation}
To specify the form of the perturbation $w_o$, which is a highly oscillatory function 
and for which we give a rather explicit formula, we need several ingredients. 
The vectorfield $v+w_o$ is not in general divergence free. 
Therefore we add the correction $w_c$ to restore this condition. Having added the correction, the 
main focus will then be on finding maps $\mathring{R}_1$ and $p_1$ with the desired estimate and such that
\[
\partial_t v_1 + {\rm div}_x (v_1\otimes v_1) + \nabla p_1 = {\rm div}_x \mathring{R}_1\, . 
\]
The perturbation $w_o$ will depend on two parameters, $\mu$ and $\lambda$, which will
satisfy the following conditions
\begin{equation}\label{e:integrality}
\lambda, \mu, \frac{\lambda}{\mu} \in \N\, .
\end{equation}
In order to achieve the estimates, $\lambda$ and $\mu$ will be chosen quite large, depending
on appropriate norms of $v$. As already mentioned, the building blocks for the perturbation $w_o$ are
Beltrami flows. In order to give the formula leading to the definition of $w_o$ we must, therefore,
study closer the particular ``geometry'' of  these flows. This will be done in the next section.
We will then be ready to define the perturbations $w_o$ and $w_c$: this task will be accomplished
in Section \ref{s:perturbations} where we also prescribe the constants $\eta$ and $M$ of the
estimates in Proposition \ref{p:iterate}. After recalling some classical Schauder theory in Section \ref{s:schauder},
in the Sections \ref{s:estimates_energy} and \ref{s:reynolds} we will prove the relevant estimates of the various
terms involved in the construction, in terms of the parameters $\lambda$ and $\mu$. The choice of these parameters
will be finally specified 
in Section \ref{s:conclusion}, where we conclude the proof of Proposition \ref{p:iterate}. 

\section{Geometric preliminaries}

In this paper we denote by $\R^{n\times n}$, as usual, the space of $n\times n$ matrices, whereas $\mathcal{S}^{n\times n}$
and $\mathcal{S}^{n\times n}_0$ denote, respectively, the corresponding subspaces of symmetric matrices 
and of trace-free symmetric matrices. The $3\times 3$ identity matrix will be denoted with $\Id$. 
For definitiveness we will use the matrix operator norm $|R|:=\max_{|v|=1}|Rv|$. Since we will
deal with symmetric matrices, we have the identity $|R|= \max_{|v|=1} |Rv \cdot v|$.

\subsection{Beltrami flows}
We start by recalling a celebrated example of stationary periodic solutions to the 3D Euler equations,
the so called Beltrami flows.
One important fact which will play a central role in our paper is that the space of Beltrami flows
contains linear spaces of fairly large dimension.

\begin{proposition}[Beltrami flows]\label{p:Beltrami}
Let $\lambda_0\geq 1$ and let $A_k\in\R^3$ be such that 
$$
A_k\cdot k=0,\,|A_k|=\tfrac{1}{\sqrt{2}},\,A_{-k}=A_k
$$
for $k\in\Z^3$ with $|k|=\lambda_0$.
Furthermore, let 
$$
B_k=A_k+i\frac{k}{|k|}\times A_k\in\C^3.
$$
For any choice of $a_k\in\C$ with $\overline{a_k} = a_{-k}$ the vectorfield
\begin{equation}\label{e:Beltrami}
W(\xi)=\sum_{|k|=\lambda_0}a_kB_ke^{ik\cdot \xi}
\end{equation}
is divergence-free and satisfies
\begin{equation}\label{e:Bequation}
\div (W\otimes W)=\nabla\frac{|W|^2}{2}.
\end{equation}
Furthermore
\begin{equation}\label{e:av_of_Bel}
\langle W\otimes W\rangle= \fint_{\T^3} W\otimes W\,d\xi = \frac{1}{2} \sum_{|k|=\lambda_0} |a_k|^2 \left( {\rm Id} - \frac{k}{|k|}\otimes\frac{k}{|k|}\right)\, .  
\end{equation}
\end{proposition}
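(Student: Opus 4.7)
The plan is to verify each claim in turn, with the key observation being that each Fourier mode $B_k e^{ik\cdot\xi}$ is itself a Beltrami eigenfunction of the curl operator, so the whole superposition inherits the Beltrami property and the identity \eqref{e:Bequation} follows from a standard vector identity.

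First I would check that $W$ is real-valued. Since $A_{-k}=A_k$, one computes $B_{-k}=A_k-i\frac{k}{|k|}\times A_k=\overline{B_k}$, and combined with $\overline{a_k}=a_{-k}$ the terms corresponding to $k$ and $-k$ in \eqref{e:Beltrami} are complex conjugates. Next, $\div W=0$ reduces to verifying $k\cdot B_k=0$ for each $k$, which holds because $k\cdot A_k=0$ by assumption and $k\cdot(k\times A_k)=0$ trivially.

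The main computation is to show the Beltrami relation $\operatorname{curl} W=\lambda_0 W$. For a single mode,
\[
\operatorname{curl}\bigl(B_k e^{ik\cdot\xi}\bigr)=i\,k\times B_k\,e^{ik\cdot\xi},
\]
and using $k\cdot A_k=0$ together with the BAC--CAB formula one obtains
\[
k\times B_k = k\times A_k + i\,\tfrac{1}{|k|}\bigl(k(k\cdot A_k)-|k|^2 A_k\bigr) = k\times A_k - i|k|A_k = -i|k|B_k.
\]
Hence $\operatorname{curl}(B_k e^{ik\cdot\xi})=|k|B_k e^{ik\cdot\xi}$, and summing over $|k|=\lambda_0$ yields $\operatorname{curl} W=\lambda_0 W$. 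Now \eqref{e:Bequation} follows at once from $\div W=0$ and the identity
\[
\div(W\otimes W)=(W\cdot\nabla)W=\nabla\tfrac{|W|^2}{2}-W\times\operatorname{curl} W,
\]
since $W\times\operatorname{curl} W=\lambda_0\,W\times W=0$.

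Finally, for \eqref{e:av_of_Bel} I would use Fourier orthogonality: only the diagonal pairs $k'=-k$ survive the average, so
\[
\langle W\otimes W\rangle=\sum_{|k|=\lambda_0}a_k a_{-k}\,B_k\otimes B_{-k}=\sum_{|k|=\lambda_0}|a_k|^2\,B_k\otimes\overline{B_k}.
\]
Writing $B_k\otimes\overline{B_k}=A_k\otimes A_k+(\hat k\times A_k)\otimes(\hat k\times A_k)+i\bigl((\hat k\times A_k)\otimes A_k-A_k\otimes(\hat k\times A_k)\bigr)$ with $\hat k=k/|k|$, the imaginary (antisymmetric) part is odd under $k\mapsto -k$ and therefore cancels in the sum. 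The remaining task is to identify the symmetric part: the vectors $\{\hat k,\sqrt2\,A_k,\sqrt2\,\hat k\times A_k\}$ form an orthonormal basis of $\R^3$ (using $|A_k|=1/\sqrt2$ and $A_k\perp k$), so
\[
\operatorname{Id}=\hat k\otimes\hat k+2A_k\otimes A_k+2(\hat k\times A_k)\otimes(\hat k\times A_k),
\]
which yields $A_k\otimes A_k+(\hat k\times A_k)\otimes(\hat k\times A_k)=\tfrac{1}{2}(\operatorname{Id}-\hat k\otimes\hat k)$ and hence \eqref{e:av_of_Bel}. No step presents a real obstacle; the only point that requires care is keeping track of the reality arguments and the orthonormal frame used in the final identity.
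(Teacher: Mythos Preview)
Your proof is correct and follows essentially the same route as the paper's: both establish $W$ is real via $B_{-k}=\overline{B_k}$, verify $\div W=0$ from $k\cdot B_k=0$, compute $\operatorname{curl}W=\lambda_0 W$ mode by mode, and then invoke the identity $(W\cdot\nabla)W=\nabla\tfrac{|W|^2}{2}-W\times\operatorname{curl}W$; for the average, both reduce to $\sum|a_k|^2 B_k\otimes\overline{B_k}$, eliminate the imaginary (antisymmetric) part via the $k\leftrightarrow -k$ symmetry, and finish with the orthonormal frame $\{\hat k,\sqrt2 A_k,\sqrt2\,\hat k\times A_k\}$. The only cosmetic difference is that the paper phrases the cancellation as ``take the real part since $B_k=\overline{B_{-k}}$'' while you phrase it as ``the antisymmetric part is odd under $k\mapsto -k$''; these are the same observation.
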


In other words $W(\xi)$ defined by \eqref{e:Beltrami} is a stationary solution of \eqref{e:Euler} with pressure $p=-\frac{|W|^2}{2}$.
For the rest of this paper we will treat the vectors $A_k\in\R^3$, $B_k\in\C^3$ as fixed (the choice of $A_k$ as prescribed in the Proposition is not unique, but this is immaterial for our purposes). The proof of Proposition \ref{p:Beltrami}
is a classic in the fluid dynamics literature, but we include it for the reader's convenience.

\begin{proof} First of all observe that $a_{-k}B_{-k} = \overline{a_kB_k}$. Thus the vector field defined in \eqref{e:Beltrami}
is real valued. Next notice that 
\[
{\rm div} W (\xi)= \sum_{|k|=\lambda_0}  ik\cdot B_k a_k e^{ik\cdot \xi} = 0\, ,
\]
because $k\cdot B_k =0$ for every $k$. 

Observe also that 
\[
{\rm curl}\, W (\xi) = \sum_{|k|=\lambda_0} i k \times B_k a_k e^{ik\cdot \xi}\, .
\]
On the other hand
\begin{eqnarray*}
i k\times B_k &=& \lambda_0 \left( i \frac{k}{|k|}\times A_k - \frac{k}{|k|} \times \left(\frac{k}{|k|}\times A_k\right)\right)\\
&=& \lambda_0  \left( i \frac{k}{|k|}\times A_k + A_k\right) = \lambda_0 B_k\, .
\end{eqnarray*}
We therefore infer ${\rm curl}\, W = \lambda_0 W$. 
Since $W$ is divergence free, ${\rm div}\, (W\otimes W) = (W\cdot \nabla) W$ and we can use the well known
vector identity
\[
{\rm div}\, (W\otimes W) = (W\cdot\nabla) W = \nabla \frac{|W|^2}{2} - W\times ({\rm curl}\, W)\, .
\]
Since we have just seen that ${\rm curl}\, W$ and $W$ are parallel, \eqref{e:Bequation} follows easily.

Finally, we compute
\[
W\otimes W = \sum_{k,j} a_k a_j B_k\otimes B_j e^{i(k+j)\cdot \xi} =
\sum_{k,j} a_k\overline{a_j}B_k\otimes\overline{B}_j e^{i(k-j)\cdot \xi}\, .
\]
Averaging this identity in $\xi$ we infer
\[
\langle W\otimes W\rangle = \sum_{|k|=\lambda_0} |a_k|^2 B_k\otimes \overline{B}_k\, .
\]
However, since $B_k = \overline{B}_{-k}$, we get
\begin{eqnarray*}
\langle W\otimes W\rangle &=& \sum_{|k|=\lambda_0} |a_k|^2\, {\rm Re} \left(B_k\otimes \overline{B}_k\right)\\
&=& \sum_{|k|=\lambda_0} |a_k|^2 \left(A_k\otimes A_k + \left(\frac{k}{|k|}\times A_k\right)
\otimes \left(\frac{k}{|k|}\times A_k\right)\right)\, .
\end{eqnarray*}
On the other hand, observe that the triple $\sqrt{2} A_k, \sqrt{2} \frac{k}{|k|}\times A_k, \frac{k}{|k|}$
forms an orthonormal base of $\R^3$. Thus,
\[
2 A_k\otimes A_k + 2 \left(\frac{k}{|k|}\times A_k\right)
\otimes \left(\frac{k}{|k|}\times A_k\right) + \frac{k}{|k|}\otimes \frac{k}{|k|} = {\rm Id}\, .
\]
This shows \eqref{e:av_of_Bel} and hence completes the proof.
\end{proof}

\subsection{The geometric lemma} One key point of
our construction is that the abundance of Beltrami flows allows to find several such flows $v$ with the property
that
\[
\langle v\otimes v\rangle (t) := \frac{1}{(2\pi)^3} \int_{\T^3} v\otimes v (x,t) \, dx
\]
equals a prescribed symmetric matrix $R$. Indeed we will need to select these flows so as to depend
smoothly on the matrix $R$, at least when $R$ belongs to a neighborhood of the identity matrix.
In view of \eqref{e:av_of_Bel}, such selection is made possible by the following Lemma.

\begin{lemma}[Geometric Lemma]\label{l:split}
For every $N\in\N$ we can choose $r_0>0$ and $\lambda_0>1$ with the following property.
There exist pairwise disjoint subsets 
$$
\Lambda_j\subset\{k\in \Z^3:\,|k|=\lambda_0\} \qquad j\in \{1, \ldots, N\}
$$
and smooth positive functions 
\[
\gamma^{(j)}_k\in C^{\infty}\left(B_{r_0} (\Id)\right) \qquad j\in \{1,\dots, N\}, k\in\Lambda_j
\]
such that
\begin{itemize}
\item[(a)] $k\in \Lambda_j$ implies $-k\in \Lambda_j$ and $\gamma^{(j)}_k = \gamma^{(j)}_{-k}$;
\item[(b)] For each $R\in B_{r_0} (\Id)$ we have the identity
\begin{equation}\label{e:split}
R = \frac{1}{2} \sum_{k\in\Lambda_j} \left(\gamma^{(j)}_k(R)\right)^2 \left({\rm Id} - \frac{k}{|k|}\otimes \frac{k}{|k|}\right) 
\qquad \forall R\in B_{r_0}(\Id)\, .
\end{equation}
\end{itemize}
\end{lemma}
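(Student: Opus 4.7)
The plan is to convert the quadratic problem into a linear one by setting $\alpha_k^{(j)} := (\gamma_k^{(j)})^2$. The task becomes: find, for each $j$, a smooth map $\alpha^{(j)} : B_{r_0}(\Id) \to (0,\infty)^{|\Lambda_j|}$, symmetric under $k\mapsto -k$, satisfying the linear equation
\[
L_j(\alpha^{(j)}(R)) := \sum_{k\in\Lambda_j}\alpha_k^{(j)}(R)\Bigl(\Id - \tfrac{k}{|k|}\otimes\tfrac{k}{|k|}\Bigr) = 2R.
\]
Since $L_j$ is linear, if at $R=\Id$ we can produce a strictly positive symmetric $\alpha^0$ with $L_j(\alpha^0)=2\,\Id$ and if $L_j$, restricted to the $k\leftrightarrow -k$-symmetric subspace, is surjective onto $\S^{3\times 3}$, then any linear right inverse $\Psi_j$ furnishes the affine ansatz
\[
\alpha^{(j)}(R) := \alpha^0 + \Psi_j(2R - 2\,\Id),
\]
which is smooth, symmetric, and (by continuity of the positivity condition) strictly positive for $R$ in a sufficiently small ball $B_{r_0}(\Id)$. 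Then $\gamma_k^{(j)} := \sqrt{\alpha_k^{(j)}}$ meets all requirements.

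It therefore remains to exhibit, for each $j$, a centrally symmetric finite set of directions $D_j \subset S^2$, with the $D_j$ pairwise disjoint, such that (i) the matrices $\{\Id - \hat d \otimes \hat d\}_{d\in D_j}$ span the $6$-dimensional space $\S^{3\times 3}$, and (ii) some strictly positive symmetric combination of them equals $2\,\Id$. Both (i) and (ii) are open conditions on the configuration: any small centrally symmetric cluster of directions near an orthonormal frame $\pm e_1,\pm e_2,\pm e_3$ (which already yields $\sum_i \hat e_i\otimes \hat e_i = \Id$), augmented by a few additional pairs to enforce the spanning, gives (ii) robustly, and generic placement of the extra pairs gives (i). One then picks $N$ pairwise disjoint centrally symmetric open subsets $V_j \subset S^2$, each containing an open neighborhood of such a target configuration.

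The main obstacle is the final step: realizing these configurations among integer lattice directions. Here I would invoke the classical density result that, for integers $n$ with sufficiently many representations as a sum of three squares, the directions $\{k/|k| : k\in\Z^3,\ |k|^2 = n\}$ become dense -- indeed equidistribute -- on $S^2$ as $n\to\infty$ along such $n$. Choosing $\lambda_0 = \sqrt{n}$ with $n$ large enough of this type forces enough integer directions inside each $V_j$ to realize a configuration satisfying (i) and (ii); taking $\Lambda_j$ to be these lattice points (automatically centrally symmetric, or made so by symmetrization), and then applying the linear construction above, completes the proof, with $r_0>0$ provided by the continuity of positivity. The delicate technical point is thus the number-theoretic density/equidistribution, while the rest is a soft linear-algebra plus open-condition argument.
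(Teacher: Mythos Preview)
Your plan is correct and mirrors the paper's: reduce to finding, for each $j$, a symmetric set of directions with a multiple of $\Id$ in the interior of the cone generated by the matrices $M_d=\Id-\hat d\otimes\hat d$ (your (i) and (ii) together say exactly this), then build the coefficient functions by a smooth local construction near $\Id$. Your affine right-inverse ansatz is the linear-algebra counterpart of the paper's barycentric coordinates in a simplex containing $\alpha\,\Id$ (obtained there via Carath\'eodory).

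Two remarks. First, the number theory you flag as ``delicate'' is in fact elementary: the paper uses only that $\mathbb{Q}^3\cap S^2$ is dense in $S^2$, which follows in one line from the inverse stereographic projection (it carries $\mathbb{Q}^2$ into $\mathbb{Q}^3\cap S^2$); taking a common multiple of finitely many denominators then produces a single $\lambda_0$ with $\tfrac{1}{\lambda_0}\Z^3\cap S^2$ meeting every $V_j$. No Linnik--Iwaniec--Duke equidistribution is needed. Second, your sketch ``orthonormal frame plus a few generic extra pairs'' does not obviously yield (ii) with \emph{all} coefficients strictly positive: the extra directions enter the obvious combination $\sum M_{e_i}=2\,\Id$ with coefficient zero, and with only one or two extra pairs the kernel of $L_j$ need not point into the positive orthant on those coordinates. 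The paper sidesteps this by working directly with the convex hull; alternatively you can take many well-spread directions, so that equal positive weights already give approximately $\tfrac{2}{3}\Id$ (the spherical average of $M_d$), and then use spanning to correct the small error while preserving positivity.
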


\begin{remark}
Though it will not be used in the sequel, the cardinality of each set $\Lambda_j$ constructed in the proof
of the Lemma is indeed bounded \emph{a priori} independently of all the other parameters. A close inspection of the
proof shows that it gives sets with cardinality at most 98. This seems however far from optimal: one should
be able to find sets $\Lambda_j$ with cardinality 14.
\end{remark}

The proof of the Geometric Lemma is based on the following well-known fact.

\begin{proposition}\label{p:Q_dense_in_S}
The set ${\mathbb Q}^3\cap {\mathbb S}^2$ is dense in ${\mathbb S}^2$.
\end{proposition}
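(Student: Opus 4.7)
The plan is to use stereographic projection, which is tailor-made for this kind of density question because it is a homeomorphism with rational coordinate functions in both directions.

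Let $N = (0,0,1)$ denote the north pole, and let $\sigma : \mathbb{S}^2 \setminus \{N\} \to \R^2$ be the stereographic projection
\[
\sigma(x,y,z) = \left(\frac{x}{1-z},\, \frac{y}{1-z}\right),
\]
whose inverse is
\[
\sigma^{-1}(u,v) = \left(\frac{2u}{u^2+v^2+1},\, \frac{2v}{u^2+v^2+1},\, \frac{u^2+v^2-1}{u^2+v^2+1}\right).
\]
Both $\sigma$ and $\sigma^{-1}$ are continuous, and their coordinate functions are rational in the input variables with denominators that never vanish on their respective domains (since $z < 1$ on $\mathbb{S}^2 \setminus \{N\}$ and $u^2+v^2+1 > 0$ on $\R^2$). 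In particular, $\sigma^{-1}$ maps $\Q^2$ into $\Q^3 \cap \mathbb{S}^2$.

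Now I would argue density in two steps. First, for any $p \in \mathbb{S}^2 \setminus \{N\}$, set $(u_0,v_0) := \sigma(p) \in \R^2$ and use density of $\Q^2$ in $\R^2$ to pick a sequence $(u_n, v_n) \in \Q^2$ with $(u_n, v_n) \to (u_0, v_0)$. Continuity of $\sigma^{-1}$ gives $\sigma^{-1}(u_n,v_n) \to p$, and each $\sigma^{-1}(u_n,v_n)$ lies in $\Q^3 \cap \mathbb{S}^2$. Second, to approximate the north pole itself, pick $(u_n, v_n) = (n, 0) \in \Q^2$ and note that $\sigma^{-1}(n,0) = (2n/(n^2+1),\, 0,\, (n^2-1)/(n^2+1)) \to (0,0,1) = N$, again through rational points on the sphere.

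There is no significant obstacle here; the only mild subtlety is remembering to handle the point $N$ separately, since it is not in the image of $\sigma^{-1}$. The rational character of the stereographic parametrization is exactly the classical mechanism producing Pythagorean-triple-style solutions of $x^2+y^2+z^2 = 1$ in $\Q^3$, and continuity does the rest.
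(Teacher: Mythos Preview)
Your proof is correct and follows essentially the same approach as the paper: both use the inverse stereographic projection, observe that it sends $\mathbb{Q}^2$ into $\mathbb{Q}^3\cap\mathbb{S}^2$, and invoke density of $\mathbb{Q}^2$ in $\mathbb{R}^2$ together with continuity. The only (welcome) difference is that you explicitly treat the north pole, which the paper leaves implicit.
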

\begin{proof}
Let ${\mathbf s}: {\mathbb R}^2\to {\mathbb S}^2$ be the inverse of the stereographic projection:
\[
{\mathbf s} (u,v) := \left(\frac{2v}{u^2+v^2+1}, \frac{2u}{u^2+v^2+1}, \frac{u^2+v^2 -1}{u^2+v^2+1}\right)\, .
\]
It is obvious that ${\mathbf s} ({\mathbb Q}^2)\subset {\mathbb Q}^3$. Since
${\mathbb Q}^2$ is dense in ${\mathbb R}^2$ and ${\mathbf s}$ is a diffeomorphism onto ${\mathbb S}^2\setminus (0,0,1)$,
the proposition follows trivially.
\end{proof}

Indeed, much more can be proved: $\frac{1}{n} {\mathbb Z}^3 \cap {\mathbb S}^2$,
distributes uniformly on the sphere for $n \in {\mathbb N}$ large whenever $n\equiv 1,2,3,4,5,6\; (\textrm{mod}\; 8)$.
This problem was raised by Linnik (see \cite{Linnik}), who proved a first result in its direction, and solved thanks to a 
breakthrough of Iwaniec \cite{Iwaniec} in the theory of modular forms of half-integral weight 
(see, for instance, \cite{Duke} and \cite{Sarnak}).

\begin{proof}[Proof of Lemma \ref{l:split}] For each vector $v\in \R^3\setminus \{0\}$, we denote by 
$M_v$ the $3\times 3$ symmetric matrix
given by 
\[
M_v = {\rm Id} - \frac{v}{|v|}\otimes \frac{v}{|v|}\, .
\] 
With this notation the identity \eqref{e:split} reads as
\begin{equation}\label{e:split2}
R = \frac{1}{2} \sum_{k\in\Lambda_j} \left(\gamma^{(j)}_k(R)\right)^2 M_k\, .
\end{equation}

\medskip

{\bf Step 1} Fix a $\lambda_0>1$ and for each set $F\subset \{k\in \Z^3: |k|= \lambda_0\}$ we consider 
the set $c (F)$ which is the interior of the convex hull, in $\S^{3\times 3}$, of $\{M_k:k\in F\}$.
We claim in this step that it suffices to find a $\lambda_0$ and $N$ disjoint subsets 
$F_j \subset \{k\in \Z^3: |k|= \lambda_0\}$ such that
\begin{itemize}
\item[(d)] $-F_j=F_j$;
\item[(e)] $c(F_j)$ contains a positive multiple of the identity.
\end{itemize}
Indeed, we will show below that, if $F_j$ satisfies (d) and (e), then we can find
a $r_0>0$, a subset $\Gamma_j\subset F_j$ and positive smooth functions
$\lambda^{(j)}_k\in C^\infty (B_{2r_0} (\Id))$ such that
\[
R = \sum_{k\in \Gamma_j} \lambda^{(j)}_k (R) M_k\, .
\]
We then find $\Lambda_j$ and the functions $\gamma^{(j)}_k$ by
\begin{itemize}
\item defining $\Lambda_j := \Gamma_j \cup - \Gamma_j$;
\item setting $\lambda^{(j)}_k = 0$ if $k \in \Lambda_j\setminus \Gamma_j$;
\item defining
\[
\gamma^{(j)}_k := \sqrt{\lambda_k^{(j)} + \lambda_{-k}^{(j)}}\, 
\]
for every $k\in \Lambda_j$.
\end{itemize}
Observe that the functions and the sets satisfy both (a) and (b). Moreover, since
at least one of the $\lambda^{(j)}_{\pm k}$ is positive on $B_{2r_0} (\Id)$, $\gamma^{(j)}_k$
is smooth in $B_{r_0} (\Id)$.

\medskip

We now come to the existence of the set $\Gamma_j$. For simplicity we drop the subscripts.
The open set $c(F)$ contains an element $\alpha {\rm Id}$ with $\alpha>0$. 
Then there are seven matrices $A_1, \ldots, A_7$ 
in $c(F)$ such that $\alpha {\rm Id}$ belongs to the interior of their convex hull, which is an open convex
simplex $S$. We choose $\vartheta$ so that the unit ball $\tilde{U}$ of center ${\alpha Id}$
and radius $\vartheta$ is contained in $S$. Then each point $R\in \tilde{U}$ can be written in a unique
way as a convex combination of the elements $A_i$: 
\[
R = \sum_{i=1}^7 \beta_i (R) A_i
\]
and the functions $\beta_i$ are positive and smooth on $\tilde{U}$. 

By Caratheodory's Theorem, each $A_i$ is the convex combination 
$\sum \lambda_{i,n} M_{v_{i,n}}$ of at most $7$ $M_{v_{i,n}}$ with $v_{i,n}\in F$,
where we require that each $\lambda_{i,n}$ is positive (observe that Caratheodory's Theorem
guarantees the existence of 7 points $M_{v_{i,n}}$ such that $A_i$ belongs to the {\em closed} 
convex hull of them; if we insist on the property that the corresponding coefficients are all
positive, then we might be obliged to choose a number smaller than $7$). 

Set $r_0:=\tfrac{\vartheta}{2\alpha}$. Then, 
\[
R = \sum_{i, n} \frac{1}{\alpha} \beta_i (\alpha R) \lambda_{i,n} M_{v_{i,n}}
\qquad \forall R\in B_{2r_0} (\Id)\, . 
\]
and each coefficient 
\[
\frac{1}{\alpha} \beta_i (\alpha R) \lambda_{i,n}
\]
is positive for every $R\in B_{2r_0} (\Id)$.

The set $\Gamma_j$ is then given by $\{v_{i,n}\}$. Note that we might have 
$v_{i,n}= v_{l,m}$ for two distinct pairs $(i,n)$ and $(l,m)$. Therefore,
for $k\in \Gamma_j$, the function $\lambda_k$ will be defined as
\[
\lambda_k (R) = \sum_{(i,n): k= v_{i,n}} \frac{1}{\alpha} \beta_i (\alpha R) \lambda_{i,n}\, .
\]

\medskip

{\bf Step 2} By Step 1, in order to prove the lemma, it suffices to find
a number $\lambda_0$ and $N$ disjoint families $F_1, \ldots, F_N \subset \lambda_0 {\mathbb S}^2\cap \Z^3$ 
such that the sets $c(F_i)$ contain all a positive multiple of the identity.
By Proposition \ref{p:Q_dense_in_S} there is a sequence $\lambda_k\uparrow \infty$  
such that the sets ${\mathbb S}^2\cap \frac{1}{\lambda_k} \Z^3$ converge, in the Hausdorff
sense, to the entire sphere ${\mathbb S}^2$. 

Given this sequence $\{\lambda_k\}$, we can easily partition each 
$\lambda_k {\mathbb S}^2\cap \Z^3$ into N disjoint symmetric families $\{F^k_j\}_{j=1, \ldots, N}$ in such a way that,
for each fixed $j$, the corresponding sequence of sets $\{\frac{1}{\lambda_k} F^k_j\}_k$ converges in the Hausdorff sense 
to ${\mathbb S}^2$. Hence, any point of $c ({\mathbb S}^2)$ is contained in $c (\frac{1}{\lambda_k} F^k_j)$ provided $k$ is large enough.
On the other hand it is easy to see that $c({\mathbb S}^2)$ contains a multiple of the identity $\alpha {\rm Id}$ 
(for instance one can adapt the argument of Lemma 4.2 in \cite{DS1}).
By Step 1, this concludes the proof.
\end{proof}

\section{The maps $v_1$, $\mathring{R}_1$ and $p_1$}\label{s:perturbations}

We have now all the tools to define the maps $v_1$, $\mathring{R}_1$ and $p_1$ of Proposition
\ref{p:iterate}. Recalling \eqref{e:define_w} and the discussion therein, $w:=v_1-v$ is the
sum of two maps, $w_o$ and $w_c$. $w_o$ is a highly oscillatory function based on ``patching
Beltrami flows'' and it will be defined first, in Section \ref{ss:w_o}. $w_c$ will then be added so as to ensure that 
$v_1$ is divergence free: in order to achieve this we will use the classical Leray projector, see
Section \ref{ss:w_c} for the precise definition. $p_1$ is related to $w_o$ by a simple formula, given
in Section \ref{ss:p_1}. Finally, in Section \ref{ss:R_1} we will define $\mathring{R}_1$. Essentially,
this last matrix field can also be thought of as a ``corrector term'', analogous to $w_c$. In fact, 
if we consider the point of view of \cite{DS1}, the Euler-Reynolds system can be stated equivalently as
the fact that the $4\times 4$ matrix 
\[
U:= \left(
\begin{array}{ll}
v_1\otimes v_1 + p_1 {\rm Id} - \mathring{R}_1 & v_1\\
v_1 & 0
\end{array}
\right)
\]
is a divergence-free in space-time. $\mathring{R}_1$ has therefore the same flavor
as $w_c$ and is also defined through a suitable (elliptic) operator, cp. with Definition \ref{d:reyn_op}.

\subsection{The perturbation $w_o$}\label{ss:w_o}
We start by defining a partition of unity on the space of velocities, i.e. the state space.
Choose two constants $c_1$ and $c_2$ such that $\frac{\sqrt{3}}{2} < c_1 < c_2 < 1$ and
we fix a function $\varphi\in C^\infty_c (B_{c _2} (0))$ which is nonnegative and identically
$1$ on the ball $B_{c_1} (0)$. 
We next consider the lattice $\Z^3\subset \R^3$ and its quotient by $(2\Z)^3$, i.e. we define the equivalence
relation
\[
(k_1, k_2, k_3) \sim (\ell_1, \ell_2, \ell_3) \quad \iff\quad \mbox{$k_i - \ell_i$ is even $\forall i$.}
\]
We then denote by $\mathcal{C}_j$ , $j=1, \ldots, 8$ the 8 equivalence classes of $\Z^3/\sim$.
For each $k\in \Z^3$ denote by $\varphi_k$ the function
\[
\varphi_k (x):= \varphi (x-k)\, .
\]
Observe that, if $k\neq \ell \in \mathcal{C}_i$, then $|k-\ell|\geq 2 > 2 c_2$. Hence 
$\varphi_k$ and  $\varphi_\ell$ have disjoint supports. On the other hand, the function
\[
\psi:=\sum_{k\in \Z^3} \varphi_k^2
\]
 is smooth, bounded and bounded away from zero. We then define
\[
\alpha_k(v) := \frac{\varphi_k(v)}{\sqrt{\psi(v)}}
\]
and 
\begin{equation*}
\phi^{(j)}_k(v,\tau) := \sum_{l\in\mathcal{C}_j}\alpha_l(\mu v)e^{-i(k\cdot \frac{l}{\mu})\tau}.
\end{equation*}
Since $\alpha_l$ and $\alpha_{\tilde l}$ have disjoint supports for $l\neq \tilde l\in\mathcal{C}_j$, 
it follows that for all $v,\tau,j$
\begin{equation}\label{e:phisum}
|\phi^{(j)}_k(v,\tau)|^2=\sum_{l\in\mathcal{C}_j}\alpha_l(\mu v)^2,
\end{equation}
and in particular $\sum_{j=1}^8|\phi^{(j)}_k(v,\tau)|^2=1$. Furthermore, for the same reason
there exist for any $m=0,1,2,\dots$ constants $C=C(m)$ such that
\begin{equation}\label{e:phiestimate1}
\sup_{v,\tau}|D^m_v\phi^{(j)}_k(v,\tau)|\leq C(m)\mu^m.
\end{equation}
Fix next any $(v, \tau)$ and $j$. Observe that there is at most one $l\in \mathcal{C}_j$
with the property that $\alpha_l (\mu v)\neq 0$ and this $l$ has the property that
$|\mu v-l|<1$. Thus, in a neighborhood of $(v, \tau)$ we will have
\begin{equation}\label{e:nearest_l}
\partial_{\tau}\phi^{(j)}_k+i(k\cdot v)\phi^{(j)}_k=ik\cdot \left(v-\frac{l}{\mu}\right)\phi^{(j)}_k,
\end{equation}
Combining \eqref{e:phiestimate1} and \eqref{e:nearest_l}, for any $m=0,1,2,\dots$ we find
constants $C=C(m,|k|)$ such that
\begin{equation}\label{e:phiestimate2}
\sup_{v,\tau}|D^m_v(\partial_{\tau}\phi^{(j)}_k+i(k\cdot v)\phi^{(j)}_k|\leq C(m,|k|)\mu^{m-1}.
\end{equation}

\bigskip

We apply Lemma \ref{l:split} with $N=8$ to obtain $\lambda_0>1$, $r_0>0$ and pairwise disjoint families $\Lambda_j$ together with corresponding
functions $\gamma^{(j)}_k\in C^{\infty}\left(B_{r_0}(\Id)\right)$. 

\bigskip

Next, set 
\[
\rho (t):= \frac{1}{3 (2\pi)^3} \left(e (t) \left(1-\frac{\delta}{2}\right) - \int_{\T^3} |v|^2 (x,t)\, dx\right)\, 
\]
and
\[
R (x,t):= \rho (t) \Id - \mathring{R} (x,t)\,,
\]
and define
\begin{equation}\label{e:w_o}
w_o (x,t) := \sqrt{\rho(t)}\sum_{j=1}^8\sum_{k\in\Lambda_j}\gamma_k^{(j)}\left(\frac{R(x,t)}{\rho(t)}\right)\phi_k^{(j)}\left(v(x,t),\lambda t\right)B_ke^{i\lambda k\cdot x}.
\end{equation}

\subsection{The constants $\eta$ and $M$}
Note that $w_o$ is well-defined only if $\frac{R}{\rho}\in B_{r_0}(\Id)$ where $r_0$ is given in Lemma \ref{l:split}. This is ensured by an appropriate choice of $\eta$. Indeed,
\[
\rho (t)\geq \frac{1}{3(2\pi)^3}\frac{\delta}{4}e(t) \geq c\delta \min_{t\in [0,1]} e(t) =: c \delta m\, ,
\]
where $c$ is a dimensional (positive) constant and $m>0$ by assumption.
Then
\[
\left\|\frac{R}{\rho (t)} - \Id\right\|\leq \frac{1}{c \delta m} \|\mathring{R}\|\leq \frac{\eta}{c m}\, .
\]
Thus, it suffices to choose 
\begin{equation}\label{e:choiceeta}
\eta := \tfrac{1}{2}c m r_0 = \frac{r_0}{24 (2\pi)^3} \min_{t\in [0,1]} e(t)\, .
\end{equation}
Observe that this choice
is independent of $\delta>0$. 

Notice next that, by our choice of $\rho(t)$ and by \eqref{e:energy_hyp}, $\rho (t)\leq \delta e(t)$.  
Thus there exists a constant $M>1$ depending only on $e$ (in particular independent of $\delta$) so that
\begin{equation}\label{e:est_wo}
\|w_o\|_0\leq \frac{\sqrt{M\delta}}{2}\, .
\end{equation}
This fixes the choice of the constant $M$ in Proposition \ref{p:iterate}

\subsection{The correction $w_c$}\label{ss:w_c}

We next define the Leray projector onto divergence-free vectorfields with zero average.

\begin{definition}\label{d:leray}
Let $v\in C^\infty (\T^3, \R^3)$ be a smooth vector field. Let
\begin{equation*}
\Q v:=\nabla\phi+\fint_{\T^3} v,
\end{equation*}
where $\phi\in C^{\infty}(\T^3)$ is the solution of
$$
\Delta\phi=\div v \textrm{ in }\T^3
$$
with $\fint_{\T^3}\phi=0$. Furthermore, let $\P=I-\Q$ be the Leray projection onto divergence-free fields with zero average.
\end{definition}

The vector field $v_1$ is then the sum of $v$ with the Leray projection $w$ of $w_o$, namely
\begin{equation*}
v_1 (x,t) := v (x,t) + \P w_o (x,t) =: v (x,t) + w(x,t)
\end{equation*}
and hence
\begin{equation*}
w_c (x,t) := - \Q w_o (x,t) = w (x,t) - w_o (x,t). 
\end{equation*}

\subsection{The pressure $p_1$}\label{ss:p_1}
We define
\begin{equation}\label{e:p_1}
p_1 := p - \frac{|w_o|^2}{2}\, . 
\end{equation}

\subsection{The Reynolds stress $\mathring{R}_1$}\label{ss:R_1}
In order to specify the choice of $\mathring{R}_1$ we introduce a new operator.

\begin{definition}\label{d:reyn_op}
Let $v\in C^\infty (\T^3, \R^3)$ be a smooth vector field. 
We then define $\RR v$ to be the matrix-valued periodic function
\begin{equation*}
\RR v:=\frac{1}{4}\left(\nabla\P u+(\nabla\P u)^T\right)+\frac{3}{4}\left(\nabla u+(\nabla u)^T\right)-\frac{1}{2}(\div u){\rm Id},
\end{equation*}
where $u\in C^{\infty}(\T^3,\R^3)$ is the solution of
\begin{equation*}
\Delta u=v-\fint_{\T^3}v\textrm{ in }\T^3
\end{equation*}
with $\fint_{\T^3} u=0$.
\end{definition}

\begin{lemma}[$\RR=\textrm{div}^{-1}$]\label{l:reyn}
For any $v\in C^\infty (\T^3, \R^3)$ we have
\begin{itemize}
\item[(a)] $\RR v(x)$ is a symmetric trace-free matrix for each $x\in \T^3$;
\item[(b)] $\div \RR v = v-\fint_{\T^3}v$.
\end{itemize}
\end{lemma}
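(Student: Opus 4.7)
The plan is a direct verification from the definitions, relying on two elementary ingredients: the identity $\div(\nabla w + (\nabla w)^T) = \Delta w + \nabla \div w$ valid for any smooth field $w$, and the decomposition $\P u = u - \Q u$ furnished by Definition \ref{d:leray}.

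For part (a), symmetry is essentially by inspection: the first two summands of $\RR v$ are symmetrised gradients and the third is a scalar multiple of $\Id$. To check that the trace vanishes I would compute the trace of each summand. The first contributes $\tfrac12 \div \P u$, which is zero because $\P u$ is solenoidal by construction. The remaining two summands contribute $\tfrac32 \div u$ and $-\tfrac32 \div u$, and they cancel.

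For part (b), I would take divergences termwise. Using the identity above together with $\div \P u = 0$, the calculation reduces to
$$
\div \RR v \;=\; \tfrac14 \Delta \P u + \tfrac34 \Delta u + \tfrac14 \nabla \div u.
$$
The key step is then to eliminate $\Delta \P u$ in favour of $\Delta u$. Since $\fint_{\T^3} u = 0$, Definition \ref{d:leray} gives $\P u = u - \nabla \phi$ with $\Delta \phi = \div u$, hence $\Delta \P u = \Delta u - \nabla \div u$. Substituting, the $\nabla \div u$ terms cancel exactly, and I am left with $\div \RR v = \Delta u = v - \fint_{\T^3} v$, which is (b).

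There is no serious obstacle here: the entire argument is linear and reduces to the standard solvability of the Poisson equation on $\T^3$ with zero mean. The only point worth emphasising is that the coefficients $\tfrac14$ and $\tfrac34$ in the definition of $\RR$ are calibrated precisely so that, after the substitution for $\Delta \P u$, the spurious $\nabla \div u$ contributions annihilate and the simultaneous requirement of being trace-free (which forced the $-\tfrac12 (\div u) \Id$ counterterm) is compatible with representing $\div^{-1}$ --- this is the only piece of algebraic fine-tuning in the construction.
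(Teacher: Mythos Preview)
Your argument is correct and follows essentially the same route as the paper's own proof: both compute the trace using $\div \P u = 0$, both take the divergence termwise, and both eliminate $\Delta \P u$ via $\P u = u - \nabla\phi$ with $\Delta\phi = \div u$ to arrive at $\div \RR v = \Delta u$. The only cosmetic difference is that the paper leaves the $\nabla\div u$ contributions uncombined in the intermediate display, whereas you collect them to $\tfrac14\nabla\div u$ before substituting.
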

\begin{proof}
It is obvious by inspection that $\RR v$ is symmetric. Since $\P v$ is divergence-free, we obtain for the trace
$$
\textrm{tr}(\RR v)=\frac{3}{4}(2\div u)-\frac{3}{2}\div u=0.
$$
Similarly, we have
\begin{equation}\label{e:div_of_Rv}
\div(\RR v)=\frac{1}{4}\Delta(\P u)+\frac{3}{4}(\nabla\div u+\Delta u)-\frac{1}{2}\nabla\div u.
\end{equation}
On the other hand recall that $\P u=u-\nabla\phi-\fint u=u-\nabla \phi$, where $\Delta\phi=\div u$. Therefore
$\Delta(\P u)=\Delta u-\nabla\div u$. Plugging this identity into \eqref{e:div_of_Rv},
we obtain
\[
\div(\RR v) = \Delta u\, 
\]
and since $u$ solves $\Delta u = v - \fint v$, (b) follows readily.
\end{proof}

Then we set
\begin{equation*}
\mathring{R}_1 :=\RR\left( \partial_t v_1  + \div(v_1\otimes v_1)+ \nabla p_1\right) .
\end{equation*}
Note that $[\partial_t v_1  + \div(v_1\otimes v_1)+ \nabla p_1]$ has average zero. Indeed:
\begin{itemize}
\item the vector field $\div(v_1\otimes v_1)+ \nabla p_1$ has average $0$ because it is the divergence of the matrix field $v_1\otimes v_1 + p_1{\rm Id}$;
\item for the same reason, the identity $\partial_t v  = - \div(v\otimes v + p{\rm Id}\, - \mathring{R})$ shows
that $\partial_t v$ has average 
$0$; on the other hand $w= \P w_o$ has average $0$ because of the definition of
$\P$; this implies that $\partial_t w$ has also average zero and thus we conclude as well that
$\partial_t v_1 = \partial_t v + \partial_t w$ has average zero.
\end{itemize}
Therefore from Lemma \ref{l:reyn} it follows that $\mathring{R}_1 (x,t)$ is symmetric and trace-free and that the
identity
\[
 \partial_t v_1 + \div (v_1\otimes v_1)
+ \nabla p_1 = \div\, \mathring{R}_1
\]
holds.

The rest of this note is devoted to prove that the triple $(v_1, p_1, \mathring{R}_1)$
satisfies the estimates \eqref{e:energy_est}, \eqref{e:Reynolds_est},  \eqref{e:C^0_est} and \eqref{e:pressure_est}.
This will be achieved by an appropriate choice of the parameters $\mu$ and $\lambda$.
In particular, we will show that the estimates hold provided $\mu$ is sufficiently large and
$\lambda$ much larger than $\mu$.

\section{Schauder Estimates}\label{s:schauder}

In the following $m=0,1,2,\dots$, $\alpha\in (0,1)$, and $\beta$ is a multiindex. We introduce the usual (spatial) 
H\"older norms as follows.
First of all, the supremum norm is denoted by $\|f\|_0:=\sup_{\T^3}|f|$. We define the H\"older seminorms 
as
\begin{equation*}
\begin{split}
[f]_{m}&=\max_{|\beta|=m}\|D^{\beta}f\|_0\, ,\\
[f]_{m+\alpha} &= \max_{|\beta|=m}\sup_{x\neq y}\frac{|D^{\beta}f(x)-D^{\beta}f(y)|}{|x-y|^{\alpha}}\, .
\end{split}
\end{equation*}
The H\"older norms are then given by
\begin{eqnarray*}
\|f\|_{m}&=&\sum_{j=0}^m[f]_j\\
\|f\|_{m+\alpha}&=&\|f\|_m+[f]_{m+\alpha}.
\end{eqnarray*}

Recall the following elementary inequalities:
\begin{equation}\label{e:Holderinterpolation}
[f]_{s}\leq C\bigl(\varepsilon^{r-s}[f]_{r}+\varepsilon^{-s}\|f\|_0\bigr)
\end{equation}
for $r\geq s\geq 0$, and 
\begin{equation}\label{e:Holderproduct}
[fg]_{r}\leq C\bigl([f]_r\|g\|_0+\|f\|_0[g]_r\bigr)
\end{equation}
for any $1\geq r\geq 0$.

Finally, we recall the classical Schauder estimates for the Laplace operator and the corresponding
estimates which we can infer for the various operators involved in our construction.

\begin{proposition}\label{p:GT}
For any $\alpha\in (0,1)$ and any $m\in \N$ there exists a constant $C (\alpha, m)$ with the following properties.
If $\phi, \psi: \T^3\to \R$ are the unique solutions of
\[
\left\{\begin{array}{l}
\Delta \phi = f\\ \\
\fint \phi =0
\end{array}\right.
\qquad\qquad 
\left\{\begin{array}{l}
\Delta \psi = {\rm div}\, F\\ \\
\fint \psi =0
\end{array}\right. \, ,
\]
then
\begin{equation}\label{e:GT_laplace}
\|\phi\|_{m+2+\alpha} \leq C (m, \alpha) \|f\|_{m, \alpha}
\quad\mbox{and}\quad \|\psi\|_{m+1+\alpha} \leq C (m, \alpha) \|F\|_{m, \alpha}\, .
\end{equation}
Moreover we have the estimates
\begin{eqnarray}
&&\|\mathcal{Q} v\|_{m+\alpha} \leq C (m,\alpha) \|v\|_{m+\alpha}\label{e:Schauder_Q}\\
&&\|\mathcal{P} v\|_{m+\alpha} \leq C (m,\alpha) \|v\|_{m+\alpha}\label{e:Schauder_P}\\
&&\|\mathcal{R} v\|_{m+1+\alpha} \leq C (m,\alpha) \|v\|_{m+\alpha}\label{e:Schauder_R}\\
&&\|\mathcal{R} ({\rm div}\, A)\|_{m+\alpha}\leq C(m,\alpha) \|A\|_{m+\alpha}\label{e:Schauder_Rdiv}\\
&&\|\mathcal{R} \mathcal{Q} ({\rm div}\, A)\|_{m+\alpha}\leq C(m,\alpha) \|A\|_{m+\alpha}\label{e:Schauder_RQdiv}\, .
\end{eqnarray}
\end{proposition}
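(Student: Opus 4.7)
The plan is to establish the six estimates in order, bootstrapping each from the classical Schauder estimate for the Poisson equation and from the earlier items in the list.

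The first line \eqref{e:GT_laplace} is standard Gilbarg-Trudinger theory on $\T^3$, obtained via a finite coordinate atlas. Its divergence-form half follows from the first: letting $W_i$ be the zero-mean solution of $\Delta W_i = F_i - \fint F_i$, one sees that $\psi = \div W$ up to an additive constant, so the first bound applied to each $W_i$ gives the second. The Leray estimates \eqref{e:Schauder_Q} and \eqref{e:Schauder_P} are then immediate from the definition $\mathcal{Q}v = \nabla\phi + \fint v$ with $\Delta\phi = \div v$, since the divergence-form bound yields $\|\nabla\phi\|_{m+\alpha} \leq C\|v\|_{m+\alpha}$, and $\mathcal{P}v = v - \mathcal{Q}v$. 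For \eqref{e:Schauder_R} I would set $\Delta u = v - \fint v$, so that $\|u\|_{m+2+\alpha} \leq C\|v\|_{m+\alpha}$, and observe that $\mathcal{R}v$ is a linear combination of first derivatives of $u$ and of $\mathcal{P}u$; the latter is estimated at order $m+1+\alpha$ by the just-proved \eqref{e:Schauder_P}. The same strategy gives \eqref{e:Schauder_Rdiv}: with $v = \div A$ one has $\fint v = 0$ and $\Delta u = \div A$, and the \emph{second} half of \eqref{e:GT_laplace} supplies the slightly weaker bound $\|u\|_{m+1+\alpha} \leq C\|A\|_{m+\alpha}$, which is exactly what is needed since $\mathcal{R}v$ only involves one derivative of $u$.

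The delicate estimate is \eqref{e:Schauder_RQdiv}, where a naive chain of inequalities loses a derivative: one would get $\|\mathcal{Q}(\div A)\|_{m+\alpha} \leq C\|\div A\|_{m+\alpha} \leq C\|A\|_{m+1+\alpha}$, and $\mathcal{R}$ can only repay one derivative, not two. The key is to exploit the gradient structure of $\mathcal{Q}(\div A)$. Writing $\mathcal{Q}(\div A) = \nabla\phi$ with $\phi$ the zero-mean solution of $\Delta\phi = \div\div A$, I would solve $\Delta u = \nabla\phi$ by $u = \nabla\psi$ with $\Delta\psi = \phi$. Since $u$ is then a pure gradient of a zero-mean function, $\mathcal{P}u = 0$ and the definition of $\mathcal{R}$ collapses to
\begin{equation*}
\mathcal{R}\mathcal{Q}(\div A) \;=\; \tfrac{3}{2}\nabla^2\psi \,-\, \tfrac{1}{2}\phi\,\Id\, .
\end{equation*}
It then suffices to bound $\|\phi\|_{m+\alpha}$ and $\|\psi\|_{m+2+\alpha}$ by $C\|A\|_{m+\alpha}$. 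For this I would ``lift'' $A$ by one Schauder step before taking the two divergences: let $H_{ij}$ be the zero-mean solution of $\Delta H_{ij} = A_{ij} - \fint A_{ij}$, so that $\|H\|_{m+2+\alpha} \leq C\|A\|_{m+\alpha}$ by the first part of \eqref{e:GT_laplace}. Then $\partial_i\partial_j H_{ij}$ has zero mean and satisfies $\Delta(\partial_i\partial_j H_{ij}) = \div\div A$, so by uniqueness it equals $\phi$, giving $\|\phi\|_{m+\alpha} \leq C\|A\|_{m+\alpha}$; a further Schauder step applied to $\Delta\psi = \phi$ then controls $\|\psi\|_{m+2+\alpha}$ by the same quantity.

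The main obstacle, and the only genuinely non-routine point, is precisely this derivative-counting issue in \eqref{e:Schauder_RQdiv}: one must simultaneously exploit the gradient structure of $\mathcal{Q}(\div A)$ (which causes $\mathcal{P}u$ to drop out of $\mathcal{R}$) and the second-order divergence representation $\phi = \partial_i\partial_j H_{ij}$ coming from a single Schauder lift of $A$. Without combining these two ingredients one either loses a derivative on $\mathcal{Q}$ or cannot convert $\Delta^{-2}\,\div\div\,A$ into an object of the correct regularity.
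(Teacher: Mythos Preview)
Your proposal is correct. For the first five estimates your argument is essentially the paper's: \eqref{e:GT_laplace} is classical, and \eqref{e:Schauder_Q}--\eqref{e:Schauder_Rdiv} follow directly from it and the definitions of $\mathcal{Q}$, $\mathcal{P}$, $\mathcal{R}$.

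For \eqref{e:Schauder_RQdiv} your route is genuinely different from the paper's, and in fact more elementary. The paper writes the same formula for $\mathcal{R}\mathcal{Q}(\div A)$ in terms of $u$ but keeps the $\nabla\mathcal{P}u$ term; it then controls $\|u\|_{m+1+\alpha}$ directly from the fourth-order equation $\Delta\Delta u_i = \partial_i\,\div\div A$, invoking Schauder theory for the biLaplacian (via a reduction to the potential-theoretic solution on $\mathbb{R}^3$ and general constant-coefficient elliptic estimates). You instead observe that $u=\nabla\psi$ is a pure gradient, so $\mathcal{P}u=0$ and the formula collapses; then your ``lift'' $\Delta H_{ij}=A_{ij}-\fint A_{ij}$ followed by $\Delta\psi=\phi=\partial_i\partial_j H_{ij}$ recovers the needed bounds by two applications of the \emph{second-order} Schauder estimate \eqref{e:GT_laplace} alone. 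Both approaches encode the same derivative count (three derivatives of $A$ distributed over two Laplacians), but yours avoids any appeal to fourth-order theory, at the cost of the extra observation that $\mathcal{P}u$ vanishes.
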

\begin{proof} The estimates \eqref{e:GT_laplace} are the usual Schauder estimates,
see for instance \cite[Chapter 4]{GT}. The meticulous reader will notice 
that the estimates in \cite{GT} are stated in $\mathbb R^n$ for the potential-theoretic solution of the
Laplace operator. The periodic case is however an easy corollary. Take for instance $\phi$ and $f$ and consider them
as periodic functions defined on $\mathbb R^3$. Consider $g= f \chi$, where $\chi$ is a cut-off function supported in $B_{6\pi} (0)$
and identically $1$ on $B_{4\pi} (0)$. Let $\tilde{\phi}$ be the potential-theoretic solution in $\mathbb R^3$ of
$\Delta \tilde{\phi} = g$. For $\tilde{\phi}$ we can invoke the Schauder estimates as in \cite[Chapter 4]{GT}. Moreover
$\phi-\tilde{\phi}$ is an harmonic function in $B_{4pi} (0)$. Obviously $\|\phi\|_{L^2 (B_{4\pi} (0))}$ can be easily bounded using 
$\Delta \phi = f$, $\fint \phi =0$ and the Parseval identity. Thus, standard properties
of harmonic functions give $\|\phi-\tilde{\phi}\|_{C^{m,\alpha} ([2\pi]^3)}\leq C(m,\alpha)\|f\|_0$.

The estimates \eqref{e:Schauder_Q}, \eqref{e:Schauder_P}, \eqref{e:Schauder_R} and
\eqref{e:Schauder_Rdiv} are easy consequences of \eqref{e:GT_laplace} and the definitions of the operators. 
The estimate
\eqref{e:Schauder_RQdiv} requires a little more care. Let $u: \T^3\to \R^3$ be the unique solution of
\[
\Delta\Delta u_i = \partial_i \sum_{j,n} \partial^2_{jn} A_{jn}
\]
with $\fint u =0$. Then 
\begin{equation}\label{e:Schauder_biLaplace}
\|u\|_{m+1+\alpha}\leq C(m,\alpha) \|A\|_{m+\alpha}\, .
\end{equation} 
First of all, with the argument above, one can reduce this estimate to a corresponding one
for the potential-theoretic solution of the biLaplace operator in $\mathbb R^3$. For this case
we can then invoke general estimates for elliptic $k$-homogeneous constant coefficients operators (see for instance \cite[Theorem 1]{Simon_Schauder}) or use the same arguments of \cite[Chapter 4]{GT} replacing the fundamental solution of the Laplacian with that of the biLaplacian.
Finally, \eqref{e:Schauder_RQdiv} follows from the identity 
\[
\RR \mathcal{Q} ({\rm div}\, A)=\frac{1}{4}\left(\nabla\P u+(\nabla\P u)^T\right)+\frac{3}{4}\left(\nabla u+(\nabla u)^T\right)-\frac{1}{2}(\div u){\rm Id}\,  
\]
and the estimates \eqref{e:Schauder_biLaplace} and \eqref{e:Schauder_P}.
\end{proof}

In what follows we will use the convention that greek subscripts of H\"older norms denote always exponents
in the open interval $(0,1)$.

\bigskip

\begin{proposition}\label{p:schauder}
Let $k\in\Z^3\setminus\{0\}$ and $\lambda\geq 1$ be fixed. 

(i) For any $a\in C^{\infty}(\T^3)$ and $m\in\N$ we have
\begin{equation}\label{e:average}
\left|\int_{\T^3}a(x)e^{i\lambda k\cdot x}\,dx\right|\leq \frac{[a]_m}{\lambda^m}.
\end{equation}

(ii) Let $\phi_{\lambda}\in C^{\infty}(\T^3)$ be the solution
of
\begin{equation*}
\Delta\phi_{\lambda}=f_{\lambda}\textrm{ in }\T^3
\end{equation*}
with $\int_{\T^3}\phi_{\lambda}=0$,
where 
$$
f_{\lambda}(x):=a(x)e^{i\lambda k\cdot x}-\fint_{\T^3} a(y)e^{i\lambda k\cdot y}\,dy.
$$
Then for any $\alpha\in(0,1)$ and $m\in\N$ we have the estimate
\begin{equation}\label{e:schauder}
\|\nabla\phi_{\lambda}\|_{\alpha}\leq \frac{C}{\lambda^{1-\alpha}}\|a\|_0+\frac{C}{\lambda^{m-\alpha}}[a]_m+\frac{C}{\lambda^m}[a]_{m+\alpha},
\end{equation}
where $C=C(\alpha,m)$.
\end{proposition}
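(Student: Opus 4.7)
The plan splits into two pieces: part (i) is an integration-by-parts estimate, and part (ii) constructs an iterative parametrix for $\Delta$ applied to an oscillatory right-hand side.

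For (i), I would exploit that $e^{i\lambda k\cdot x}$ is an eigenfunction of the first-order operator $L:=\tfrac{1}{i\lambda|k|^2}(k\cdot\nabla)$. Writing $e^{i\lambda k\cdot x}=L^m e^{i\lambda k\cdot x}$ and integrating by parts $m$ times on the torus (there are no boundary contributions) transfers the derivatives onto $a$:
\[
\int_{\T^3} a(x)\,e^{i\lambda k\cdot x}\,dx \;=\; \frac{(-1)^m}{(i\lambda|k|^2)^m}\int_{\T^3}(k\cdot \nabla)^m a(x)\,e^{i\lambda k\cdot x}\,dx.
\]
Bounding the integrand by a combinatorial constant times $|k|^m[a]_m$ and using $|k|\geq 1$ then yields the claimed $\lambda^{-m}$ decay.

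For (ii), the idea is to build an approximate inverse of $\Delta$ from the identity $\Delta e^{i\lambda k\cdot x}=-\lambda^2|k|^2 e^{i\lambda k\cdot x}$. First I would set $\psi^{(0)}_\lambda := -\tfrac{a(x)}{\lambda^2|k|^2}\,e^{i\lambda k\cdot x}$; a direct computation yields
\[
\Delta\psi^{(0)}_\lambda - a\,e^{i\lambda k\cdot x} \;=\; -\frac{2i(k\cdot\nabla a)}{\lambda|k|^2}\,e^{i\lambda k\cdot x} - \frac{\Delta a}{\lambda^2|k|^2}\,e^{i\lambda k\cdot x} \;=:\; b_1(x)\,e^{i\lambda k\cdot x},
\]
so the residual is smaller by a factor $\lambda^{-1}$ at the price of one higher derivative of $a$. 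Iterating, I define $\psi^{(j+1)}_\lambda := \psi^{(j)}_\lambda - \tfrac{b_{j+1}}{\lambda^2|k|^2}e^{i\lambda k\cdot x}$ where $b_{j+1}e^{i\lambda k\cdot x}$ is the residual of $\psi^{(j)}_\lambda$. After $m$ such steps, $\Delta\psi^{(m)}_\lambda = a\,e^{i\lambda k\cdot x}+E_m$ with $E_m$ a finite sum of terms of the form $\lambda^{-j}(\text{derivatives of }a)\,e^{i\lambda k\cdot x}$ with $j\geq m$.

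Writing $\phi_\lambda = (\psi^{(m)}_\lambda - \fint \psi^{(m)}_\lambda) + r_\lambda$, the correction $r_\lambda$ solves a Poisson equation with right-hand side $-c_\lambda - E_m$, where $c_\lambda:=\fint a\,e^{i\lambda k\cdot y}\,dy$ is controlled by part (i) and both pieces are of size $O(\lambda^{-m})$. The classical Schauder bound \eqref{e:GT_laplace} then controls $\|\nabla r_\lambda\|_\alpha$ by $C\lambda^{-m}(\|a\|_0+[a]_m+[a]_{m+\alpha})$. For the direct estimate of $\nabla\psi^{(m)}_\lambda$, I would differentiate each summand and apply the product rule \eqref{e:Holderproduct} together with $[e^{i\lambda k\cdot x}]_\alpha\leq C\lambda^\alpha$: the leading piece coming from $\psi^{(0)}_\lambda$ contributes precisely the $\|a\|_0/\lambda^{1-\alpha}$ term in \eqref{e:schauder}, while the iterative corrections produce further pieces involving intermediate H\"older seminorms of $a$.

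The main obstacle is the bookkeeping. The iteration generates a forest of seminorms $[a]_j$, $1\leq j<m$, and $[a]_{m+\ell}$, $0\leq \ell\leq \alpha$, each with its own $\lambda$-weight; collapsing these into the three right-hand side terms of \eqref{e:schauder} requires systematic application of the interpolation inequality \eqref{e:Holderinterpolation}, which trades intermediate seminorms for weighted combinations of $\|a\|_0$, $[a]_m$, and $[a]_{m+\alpha}$ with matching powers of $\lambda$. Once this is handled cleanly, the remainder is standard: the H\"older product rule, the elementary estimate $[e^{i\lambda k\cdot x}]_\alpha\leq C\lambda^\alpha$, and classical Schauder theory for the Laplacian.
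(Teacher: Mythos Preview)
Your argument for part (i) is correct and coincides with the paper's: both are the same integration-by-parts via the directional derivative $\tfrac{k}{|k|^2}\cdot\nabla$.

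For part (ii) your Laplacian-parametrix route is genuinely different from the paper's, and as described it has a gap. Each correction step produces not only the first-order residual $-\tfrac{2i(k\cdot\nabla b_j)}{\lambda|k|^2}$ but also the second-order piece $-\tfrac{\Delta b_j}{\lambda^2|k|^2}$. Writing $L_1\sim\lambda^{-1}\nabla$ and $L_2\sim\lambda^{-2}\Delta$, after $m$ steps the residual is (schematically) $(L_1+L_2)^m a$, which contains derivatives of $a$ of every order from $m$ up to $2m$; the explicit correctors $\psi^{(j)}$ likewise carry derivatives up to order $2j$. Thus the ``forest of seminorms'' is not confined to $[a]_j$ with $j\le m+\alpha$ as you assert: terms of the form $\lambda^{-(m+q)}[a]_{m+q}$ and $\lambda^{-(m+q)}[a]_{m+q+\alpha}$ for $1\le q\le m$ inevitably appear in $\|E_m\|_\alpha$, and the interpolation inequality \eqref{e:Holderinterpolation} only bounds \emph{lower} seminorms by higher ones, never the reverse. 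Your claimed bound $\|\nabla r_\lambda\|_\alpha\le C\lambda^{-m}(\|a\|_0+[a]_m+[a]_{m+\alpha})$ therefore does not follow, and the estimate cannot be collapsed into the precise form \eqref{e:schauder}.

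The paper avoids this by using a \emph{first-order} iteration on the right-hand side rather than a second-order parametrix for $\Delta$. From the identity
\[
b(x)\,e^{i\lambda k\cdot x}
=\frac{1}{\lambda}\,\div\!\left[-\,\frac{ik}{|k|^2}\,b(x)\,e^{i\lambda k\cdot x}\right]
+\frac{1}{\lambda}\Bigl(i\tfrac{k}{|k|^2}\!\cdot\!\nabla b\Bigr)e^{i\lambda k\cdot x},
\]
each step gains exactly one power of $\lambda^{-1}$ at the price of exactly one derivative; after $m$ iterations one has $a\,e^{i\lambda k\cdot x}=\sum_{j=0}^{m-1}\lambda^{-(j+1)}\div A_j+\lambda^{-m}F_m$ with $A_j$ involving $D^ja$ and $F_m$ involving $D^ma$ only. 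The divergence-form Schauder estimate (second inequality in \eqref{e:GT_laplace}) handles each $\div A_j$ term, the standard estimate handles $F_m$, and now only the intermediate seminorms $[a]_j$, $j<m$, need to be interpolated away --- which \eqref{e:Holderinterpolation} does cleanly. To repair your scheme you should either switch to this divergence decomposition outright, or at each step correct only the $L_1$-part and push the $L_2$-contributions into the remainder, which amounts to the same thing.
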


\begin{proof}
For $j=0,1,\dots$ define
\begin{align*}
A_j(y,\xi)&:=-i\left[\frac{k}{|k|^2}\left(i\frac{k}{|k|^2}\cdot\nabla\right)^ja(y)\right]e^{ik\cdot \xi}\, ,\\
F_j(y,\xi)&:=\left[\left(i\frac{k}{|k|^2}\cdot\nabla\right)^ja(y)\right]e^{ik\cdot \xi}\, .
\end{align*}
Direct calculation shows that
$$
F_j(x,\lambda x)=\frac{1}{\lambda}\div\bigl[A_j(x,\lambda x)\bigr]+\frac{1}{\lambda}F_{j+1}(x,\lambda x).
$$
In particular for any $m\in\N$
\begin{equation*}
a(x)e^{i\lambda k\cdot x}=F_0(x,\lambda x)=\frac{1}{\lambda}\sum_{j=0}^{m-1} \frac{1}{\lambda^j}\div\bigl[A_j(x,\lambda x)\bigr]+\frac{1}{\lambda^{m}}F_m(x,\lambda x)
\end{equation*}
Integrating this over $\T^3$ and using that $|k|\geq 1$ we obtain \eqref{e:average}.

Next, using \eqref{e:Holderinterpolation} and \eqref{e:Holderproduct} we have for any $j\leq m-1$
\begin{equation*}
\begin{split}
\|A_j(\cdot,\lambda\cdot)\|_{\alpha}&\leq C\left(\lambda^{\alpha}[a]_j+[a]_{j+\alpha}\right)\\
&\leq C\lambda^{j+\alpha}\left(\lambda^{-m}[a]_m+\|a\|_0\right)
\end{split}
\end{equation*}
and similarly
\begin{equation*}
\|F_m(\cdot,\lambda\cdot)\|_{\alpha}\leq C\left(\lambda^{\alpha}[a]_m+[a]_{m+\alpha}\right)\, .
\end{equation*}
Moreover, according to standard \eqref{e:GT_laplace},
\begin{equation*}
\|\nabla\phi\|_{\alpha}\leq C\biggl(\frac{1}{\lambda}\sum_{j=0}^{m-1} \frac{1}{\lambda^j}\|A_j(\cdot,\lambda \cdot)\|_{\alpha}+\frac{1}{\lambda^{m}}\|F_m(\cdot,\lambda \cdot)\|_{\alpha}+\left|\fint_{\T^3} F_0(x,\lambda x)\,dx\right|\biggr),
\end{equation*}
hence, using \eqref{e:average} for the last term,
\begin{equation*}
\|\nabla\phi\|_{\alpha}\leq \frac{C}{\lambda^{1-\alpha}}\|a\|_0+\frac{C}{\lambda^{m-\alpha}}[a]_m+\frac{C}{\lambda^m}[a]_{m+\alpha}
\end{equation*}
as required.
\end{proof}

\bigskip

\begin{corollary}\label{c:schauder}
Let $k\in\Z^3\setminus\{0\}$ be fixed. For a smooth vectorfield $a\in C^{\infty}(\T^3;\R^3)$ let 
$F(x):=a(x)e^{i\lambda k\cdot x}$. Then we have
$$
\|\RR(F)\|_{\alpha}\leq \frac{C}{\lambda^{1-\alpha}}\|a\|_0+\frac{C}{\lambda^{m-\alpha}}[a]_m+\frac{C}{\lambda^m}[a]_{m+\alpha},
$$
where $C=C(\alpha,m)$.
\end{corollary}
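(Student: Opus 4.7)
The plan is to unpack the definition of $\RR$ and reduce every term to a bound on $\|\nabla u\|_\alpha$, where $u$ is the auxiliary vector field with $\Delta u = F - \fint F$ and $\fint u = 0$; Proposition \ref{p:schauder}(ii) applied componentwise to the three scalar equations $\Delta u_i = a_i(x)e^{i\lambda k\cdot x} - \fint a_i(y)e^{i\lambda k\cdot y}\,dy$ already yields the desired three-term bound
\begin{equation*}
\|\nabla u\|_\alpha \;\leq\; \frac{C}{\lambda^{1-\alpha}}\|a\|_0+\frac{C}{\lambda^{m-\alpha}}[a]_m+\frac{C}{\lambda^m}[a]_{m+\alpha}.
\end{equation*}

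Next I would treat the three contributions to $\RR F$ separately. The term $\tfrac{3}{4}(\nabla u+(\nabla u)^T)$ is estimated directly by $\|\nabla u\|_\alpha$. The term $(\div u)\,\Id$ is estimated by $\|\div u\|_\alpha \leq C\|\nabla u\|_\alpha$. For the term involving $\nabla\P u$ I would write $\P u = u - \nabla\phi$ with $\Delta\phi=\div u$ and $\fint\phi=0$, so that $\nabla\P u = \nabla u - \nabla^2\phi$; the classical Schauder estimate \eqref{e:GT_laplace} then gives $\|\nabla^2\phi\|_\alpha \leq C\|\div u\|_\alpha \leq C\|\nabla u\|_\alpha$. (Equivalently one could invoke \eqref{e:Schauder_P} after noting that differentiation and $\P$ commute, but passing through $\phi$ is the most direct reduction to what is already known.)

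Putting these three pieces together gives $\|\RR F\|_\alpha \leq C\|\nabla u\|_\alpha$, and substituting the bound above yields the claimed inequality
\begin{equation*}
\|\RR(F)\|_{\alpha}\leq \frac{C}{\lambda^{1-\alpha}}\|a\|_0+\frac{C}{\lambda^{m-\alpha}}[a]_m+\frac{C}{\lambda^m}[a]_{m+\alpha}.
\end{equation*}
I do not foresee any real obstacle: the hard analytic work (the oscillatory cancellation obtained by repeated ``integration by parts'' packaged in the identity $F_j(x,\lambda x) = \lambda^{-1}\div A_j(x,\lambda x) + \lambda^{-1}F_{j+1}(x,\lambda x)$) has already been carried out in Proposition \ref{p:schauder}. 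The Corollary is a mechanical combination of that proposition with the definition of $\RR$ and standard Schauder estimates for $-\Delta$, the only thing worth double-checking being that the contribution from $\nabla\P u$ does not introduce an extra factor of $\lambda$, which is exactly what the above reduction to $\|\div u\|_\alpha \leq \|\nabla u\|_\alpha$ verifies.
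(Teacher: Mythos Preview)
Your proposal is correct and follows essentially the same approach as the paper, which simply states that the corollary is an immediate consequence of the definition of $\RR$, the Schauder estimate \eqref{e:Schauder_P} for $\P$, and Proposition \ref{p:schauder}. Your argument just unpacks these ingredients explicitly; the only cosmetic difference is that for the $\nabla\P u$ term you reduce to $\|\nabla^2\phi\|_\alpha\leq C\|\div u\|_\alpha$ via \eqref{e:GT_laplace} rather than citing \eqref{e:Schauder_P} directly, a variant you yourself note is equivalent.
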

\begin{proof}
This is an immediate consequence of the definition of $\RR$, the Schauder estimate \eqref{e:Schauder_P} for $\P$ and Proposition \ref{p:schauder} above.
\end{proof}

\section{Estimates on the corrector and the energy}\label{s:estimates_energy}

In all subsequent estimates, unless otherwise stated, $C$ denotes a generic constant that can vary from line to line, and 
depends on $e,v,\mathring{R}$ as well as on $\lambda_0,\alpha$ and $\delta$, but is independent of $\lambda$ and $\mu$. Smallness
of the respective quantities will be achieved by an appropriate choice of $\lambda,\mu$ in Section \ref{s:conclusion}.
Moreover, all estimates will implicitly assume \eqref{e:integrality}, in particular that $1\leq\mu\leq\lambda$. 

Our aim is to estimate the space-time sup-norm of $v_1-v=w = w_o+w_c$ and $\mathring{R}_1$. Since $w_c$ and $\mathring{R}_1$ are
defined in terms of the singular integral operators $\P,\Q$ and $\RR$, which act in space, instead of obtaining directly estimates of the $C^0$ norm, we will use Schauder estimates to obtain bounds on spatial H\"older norms. Thus, in the sequel the H\"older norms will denote spatial norms, and are understood to be uniform in time $t\in[0,1]$. Moreover, if the H\"older exponent is denoted by a greek letter, then it is a number in the open
interval $(0,1)$.

\bigskip

It will be convenient to write $w_o$ as
$$
w_o(x,t)=W(x,t,\lambda t,\lambda x),
$$
where
\begin{align}
W(y,s,\tau,\xi)&:=\sum_{|k|=\lambda_0}a_k(y,s,\tau)B_ke^{ik\cdot \xi}\label{e:bigW}\\
&=\sqrt{\rho(s)}\sum_{j=1}^8\sum_{k\in\Lambda_j}\gamma_k^{(j)}\left(\frac{R(y,s)}{\rho(s)}\right)\phi_k^{(j)}\left(v(y,s),\tau\right)B_ke^{ik\cdot \xi}.
\end{align}
We summarize the main properties of
the coefficients $W$:

\begin{proposition}\label{p:W}
(i) Let $a_k\in C^{\infty}(\T^3\times[0,1]\times\R)$ be given by \eqref{e:bigW}. Then for any $r\geq 0$ 
\begin{eqnarray*}
\|a_k(\cdot,s,\tau)\|_r&\leq & C\mu^r,\\
\|\partial_s a_k(\cdot,s,\tau)\|_r&\leq & C\mu^{r+1},\\
\|\partial_\tau a_k(\cdot,s,\tau)\|_r&\leq & C\mu^r,\\
\|(\partial_\tau a_k+i(k\cdot v)a_k)(\cdot,s,\tau)\|_r&\leq & C\mu^{r-1}.
\end{eqnarray*}

(ii) The matrix-function $W\otimes W$ can be written as
\begin{equation}\label{e:WoW}
(W\otimes W)(y,s,\tau,\xi)=R(y,s)+\sum_{1\leq |k| \leq 2\lambda_0}U_k(y,s,\tau)e^{ik\cdot \xi},
\end{equation}
where the coefficients $U_k\in C^{\infty}(\T^3\times[0,1]\times\R;\S^{3\times 3})$ satisfy
\begin{equation}\label{e:Uk}
U_kk=\frac{1}{2}(\tr U_k)k
\end{equation}
and for any $r\geq 0$
\begin{eqnarray*}
\|U_k^\mu(\cdot,s,\tau)\|_r&\leq & C\mu^r,\\
\|\partial_s U_k^\mu(\cdot,s,\tau)\|_r&\leq & C\mu^{r+1},\\
\|\partial_\tau U_k(\cdot,s,\tau)\|_r&\leq & C\mu^r,\\
\|(\partial_\tau U_k^\mu+i(k\cdot v)U_k^\mu)(\cdot,s,\tau)\|_r&\leq & C\mu^{r-1},\\
\end{eqnarray*}
In all these estimates the constant $C$ depends on $r$ and $e,v,\mathring{R}$ but is independent of $(s,\tau)$ and $\mu$.
\end{proposition}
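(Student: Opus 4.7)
The plan for part (i) is to unwind the definition
\[
a_k(y,s,\tau)=\sqrt{\rho(s)}\sum_{j:\,k\in\Lambda_j}\gamma_k^{(j)}\!\left(\tfrac{R(y,s)}{\rho(s)}\right)\phi_k^{(j)}(v(y,s),\tau)
\]
and track where $\mu$ enters. The functions $\rho$, $R/\rho$, $v$, and $\gamma_k^{(j)}$ are independent of $\mu$ and smooth with bounds depending only on $e,v,\mathring R$, whereas $\mu$ enters exclusively through $\phi_k^{(j)}$ via $\alpha_l(\mu v)$. Estimate \eqref{e:phiestimate1} then shows that each spatial derivative of $\phi_k^{(j)}$ costs a factor $\mu$, while $\partial_\tau$ brings only a bounded factor $k\cdot l/\mu$ (bounded because $\alpha_l(\mu v)\ne0$ forces $|l|/\mu\le c_2$). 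Applying the chain rule and the product estimate \eqref{e:Holderproduct} then yields the three bounds for $\|a_k\|_r$, $\|\partial_\tau a_k\|_r$, and $\|\partial_s a_k\|_r$ (the last picks up an extra $\mu$ because the time derivative eventually hits $\phi_k^{(j)}$ through $\partial_s v$). The key bound on $\partial_\tau a_k+i(k\cdot v)a_k$ is not a pointwise calculation but a reflection of the already-established \eqref{e:phiestimate2}: since $\sqrt{\rho}\gamma_k^{(j)}(R/\rho)$ is independent of $\tau$ and of $\mu$, multiplying by it preserves the $\mu^{-1}$ gain.

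For part (ii) I would start from the algebraic expansion
\[
(W\otimes W)(y,s,\tau,\xi)=\sum_{|k_1|=|k_2|=\lambda_0}a_{k_1}a_{k_2}B_{k_1}\otimes B_{k_2}\,e^{i(k_1+k_2)\cdot\xi},
\]
so that the nonzero frequencies $k=k_1+k_2$ satisfy $1\le|k|\le2\lambda_0$, and each coefficient $U_k$ is a sum of only $O(1)$ terms. The zero mode ($k_2=-k_1$) requires identification with $R(y,s)$. Using $B_{-k}=\overline{B_k}$, $\gamma^{(j)}_k=\gamma^{(j)}_{-k}$, the disjointness and symmetry $-\Lambda_j=\Lambda_j$, and the fact (from \eqref{e:phisum}) that $\phi_{-k}^{(j)}=\overline{\phi_k^{(j)}}$ and $|\phi_k^{(j)}|^2=\sum_{l\in\mathcal{C}_j}\alpha_l(\mu v)^2$ is independent of $k$ and $\tau$, the zero mode collapses to
\[
\rho(s)\sum_{j=1}^8\Bigl(\sum_{l\in\mathcal{C}_j}\alpha_l(\mu v)^2\Bigr)\,\frac{1}{2}\sum_{k\in\Lambda_j}\bigl(\gamma_k^{(j)}(R/\rho)\bigr)^2 M_k.
\]
By the Geometric Lemma \ref{l:split} the inner sum equals $R/\rho$ independently of $j$, and $\sum_j\sum_{l\in\mathcal{C}_j}\alpha_l^2\equiv1$, so the zero mode is exactly $R(y,s)$, yielding \eqref{e:WoW}.

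The identity $U_kk=\tfrac12(\tr U_k)k$ in \eqref{e:Uk} is the heart of part (ii), and I would derive it not by direct tensor manipulation but by invoking Proposition \ref{p:Beltrami}. For every fixed $(y,s,\tau)$, the map $\xi\mapsto W(y,s,\tau,\xi)=\sum_{|k'|=\lambda_0}a_{k'}(y,s,\tau)B_{k'}e^{ik'\cdot\xi}$ is exactly a Beltrami flow, hence
\[
\mathrm{div}_\xi(W\otimes W)=\nabla_\xi\tfrac{|W|^2}{2}.
\]
Reading off the Fourier mode $e^{ik\cdot\xi}$ on both sides gives $iU_kk=\tfrac{i}{2}(\tr U_k)k$, which is \eqref{e:Uk}.

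Finally, the four estimates on $U_k$ follow from bilinearity
\[
U_k=\sum_{k_1+k_2=k}a_{k_1}a_{k_2}B_{k_1}\otimes B_{k_2},
\]
the product rule \eqref{e:Holderproduct}, and part (i). The only nontrivial step is the last estimate: writing
\[
\partial_\tau U_k+i(k\cdot v)U_k=\!\!\!\sum_{k_1+k_2=k}\!\!\!\bigl[(\partial_\tau a_{k_1}+i(k_1\cdot v)a_{k_1})a_{k_2}+a_{k_1}(\partial_\tau a_{k_2}+i(k_2\cdot v)a_{k_2})\bigr]B_{k_1}\otimes B_{k_2},
\]
valid because $k_1+k_2=k$, each summand inherits the gain $\mu^{-1}$ from part (i), and the product rule closes the bound at $C\mu^{r-1}$. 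The main conceptual obstacle is the Beltrami argument for \eqref{e:Uk}; everything else is bookkeeping via chain and product rules on top of the bounds of part (i).
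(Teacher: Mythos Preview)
Your proposal is correct and follows essentially the same route as the paper: part (i) via \eqref{e:phiestimate1}--\eqref{e:phiestimate2} and chain/product rules, part (ii) via the Fourier expansion of $W\otimes W$, identification of the zero mode through the Geometric Lemma, and \eqref{e:Uk} read off from the Beltrami identity \eqref{e:Bequation}. One small slip: $\alpha_l(\mu v)\ne 0$ gives $|\mu v-l|<c_2$, hence $|l|/\mu\le \|v\|_0+1$, not $|l|/\mu\le c_2$; the boundedness you need still holds, so the argument is unaffected.
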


\begin{proof}
The estimates for $a_k$ are a consequence of \eqref{e:phiestimate1} and \eqref{e:phiestimate2}. Indeed,
since the constants in the estimates are allowed to depend on $e,v,\mathring{R}$, one only needs to keep track of the number of derivatives
of $\phi_k^{(j)}$ with respect to $v$. Then the estimates on $a_k, \partial_sa_k$ and $\partial_{\tau}a_k+i(k\cdot v)a_k$ immediately follow.
From the triangle inequality we can then also conclude the estimate on $\partial_\tau a_k$.

Next, consider the expansion of $\xi\mapsto W\otimes W$ into a Fourier series in $\xi$, i.e.
$$
(W\otimes W)(y,s,\tau,\xi)=U_0(y,s,\tau)+\sum_{1\leq |k| \leq 2\lambda_0}U_k(y,s,\tau)e^{ik\cdot \xi}.
$$
Since each $U_k$ is the sum of finitely many terms of the form $a_{k'}a_{k''}$, the estimates for $U_k$ follow from those for $a_k$.

Next, since $U_0$ is given by the average (in $\xi$), 
in order to obtain \eqref{e:WoW} we need to show that
\begin{equation*}
\fint_{\T^3}W\otimes W(y,s,\tau,\xi)\,d\xi=R(y,s).
\end{equation*}
To this end we calculate:
\begin{equation*}
\begin{split}
\fint_{\T^3}W\otimes W\,d\xi &\overset{\textrm{\eqref{e:av_of_Bel}}}{=} \frac{\rho}{2}\sum_j\sum_{k\in\Lambda_j}\left(\gamma_k^{(j)}(\rho^{-1}R)\right)^2|\phi^{(j)}_k(v,\tau)|^2\left(\textrm{Id}-\frac{k}{|k|}\otimes\frac{k}{|k|}\right)\\
&\overset{\textrm{\eqref{e:phisum}}}{=}\frac{\rho}{2}\sum_j\sum_{k\in\Lambda_j}\sum_{l\in\mathcal{C}_j}\left(\gamma_k^{(j)}(\rho^{-1}R)\right)^2\left(\alpha_l(v)\right)^2\left(\textrm{Id}-\frac{k}{|k|}\otimes\frac{k}{|k|}\right)\\
&\overset{\textrm{\eqref{e:split}}}{=}R\sum_j\sum_{l\in\mathcal{C}_j}\left(\alpha_l(v)\right)^2\\
&=R\sum_{l\in\Z^3}\left(\alpha_l(v)\right)^2=R.
\end{split}
\end{equation*}
Finally, \eqref{e:Uk} is a direct consequence of Proposition \ref{p:Beltrami}, in particular \eqref{e:Bequation}.
\end{proof}

After this preparation we are ready to estimate all the terms in the perturbation scheme. First of all we verify that
the corrector term $w_c$ is indeed much smaller than the main perturbation term $w_o$:

\begin{lemma}[Estimate on the corrector]
\begin{equation}
\|w_c\|_{\alpha}\leq C\frac{\mu}{\lambda^{1-\alpha}}\label{e:w_c_est}
\end{equation}
\end{lemma}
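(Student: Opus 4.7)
My approach is to exploit the key algebraic cancellation $k \cdot B_k = 0$ together with the oscillatory Schauder estimate from Proposition \ref{p:schauder}(ii). By Definition \ref{d:leray}, $w_c = -\Q w_o = -\nabla \phi - \fint_{\T^3} w_o\, dx$, where $\phi \in C^\infty(\T^3)$ is the unique zero-mean solution of $\Delta \phi = \div w_o$. Writing $w_o$ in the form from \eqref{e:bigW} as $\sum_{|k|=\lambda_0} a_k(x,t,\lambda t)\, B_k\, e^{i\lambda k\cdot x}$, the oscillatory average bound \eqref{e:average} together with Proposition \ref{p:W}(i) immediately gives $|\fint_{\T^3} w_o|\leq C\mu^m/\lambda^m$ for any $m\in\N$, which is negligible compared with $\mu/\lambda^{1-\alpha}$ already for $m=1$.

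The crucial observation is that, precisely because $k\cdot B_k=0$, the ``fast'' $\lambda$-term in the divergence of $w_o$ vanishes:
$$
\div w_o(x,t)=\sum_{|k|=\lambda_0} \bigl(B_k\cdot\nabla_x a_k(x,t,\lambda t)\bigr)\,e^{i\lambda k\cdot x}.
$$
Hence the source of the Poisson equation for $\phi$ is of order $\mu$ (instead of the naive $\mu\lambda$), which is precisely the gain one expects for a corrector term in a high-frequency oscillation scheme. The coefficients $F_k:=B_k\cdot\nabla_x a_k$ inherit from Proposition \ref{p:W}(i) the bounds $\|F_k\|_r\leq C\mu^{r+1}$, uniformly in $t$ and for every $r\geq 0$.

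I would then apply Proposition \ref{p:schauder}(ii) to each oscillatory component $F_k\, e^{i\lambda k\cdot x}$ (after subtracting its small $x$-mean, as required by the statement) and sum over the finitely many $k$ with $|k|=\lambda_0$, to obtain
$$
\|\nabla\phi\|_\alpha \leq C\left[\frac{\mu}{\lambda^{1-\alpha}}+\frac{\mu^{m+1}}{\lambda^{m-\alpha}}+\frac{\mu^{m+1+\alpha}}{\lambda^m}\right].
$$
The leading term is exactly the target bound. The main point, and essentially the only subtlety, is the absorption of the two Schauder remainder terms into the leading one: a direct comparison shows each is dominated by $\mu/\lambda^{1-\alpha}$ as soon as $\mu^m \lesssim \lambda^{m-1}$, which in the eventual regime $\lambda\gg\mu$ to be enforced in Section \ref{s:conclusion} is achieved by fixing $m=m(\alpha)$ once and for all. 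With this choice the bound $\|w_c\|_\alpha\leq C\mu/\lambda^{1-\alpha}$ follows, the constant $C$ depending only on $\alpha$, $m$, $\lambda_0$ and the background data $e,v,\mathring{R}$, as permitted by the convention introduced at the beginning of Section \ref{s:estimates_energy}.
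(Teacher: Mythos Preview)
Your argument correctly isolates the cancellation $k\cdot B_k=0$, but routing it through Proposition~\ref{p:schauder}(ii) produces remainder terms that cannot be absorbed under the standing hypotheses of the section. The ratio of your second term $\mu^{m+1}/\lambda^{m-\alpha}$ to the target $\mu/\lambda^{1-\alpha}$ equals $\mu^m/\lambda^{m-1}$, and with only $1\le\mu\le\lambda$ assumed (the sole constraint coming from \eqref{e:integrality}) this quantity is unbounded --- take $\mu=\lambda$. Your proposed fix, choosing $m$ so that $\mu^m\lesssim\lambda^{m-1}$ ``in the eventual regime $\lambda\gg\mu$'', is a forward reference to the relation $\mu\sim\lambda^\beta$ with $\beta<1$ imposed only in Section~\ref{s:conclusion}; the required $m$ then depends on $\beta$, not on $\alpha$, and the resulting constant violates the convention stated at the beginning of Section~\ref{s:estimates_energy}. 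So as written you have not proved the lemma as stated, only a version conditional on the later parameter choice.

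The paper sidesteps this by using $k\cdot B_k=0$ at the level of the vector field itself rather than its divergence. From $k\times(k\times B_k)=-|k|^2 B_k$ one obtains the exact decomposition
\[
w_o=\frac{1}{\lambda}\nabla\times\Bigl(\sum_{|k|=\lambda_0}(-i)\,a_k\,\frac{k\times B_k}{|k|^2}\,e^{i\lambda k\cdot x}\Bigr)+\frac{1}{\lambda}\,u_c,\qquad u_c:=\sum_{|k|=\lambda_0} i\,\nabla a_k\times\frac{k\times B_k}{|k|^2}\,e^{i\lambda k\cdot x}.
\]
The curl term is divergence-free with zero average, hence annihilated by $\Q$, so that $w_c=\tfrac{1}{\lambda}\Q u_c$ up to a sign. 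The ordinary Schauder estimate~\eqref{e:Schauder_Q} together with the product rule~\eqref{e:Holderproduct} then gives
\[
\|w_c\|_\alpha\le \frac{C}{\lambda}\|u_c\|_\alpha\le \frac{C}{\lambda}\bigl(\mu\lambda^\alpha+\mu^{1+\alpha}\bigr)\le C\,\frac{\mu}{\lambda^{1-\alpha}},
\]
the last inequality using only $\mu\le\lambda$. No stationary-phase remainders appear and no forward reference is needed. The identity $w_c=\tfrac{1}{\lambda}\Q u_c$ is moreover reused in Section~\ref{s:reynolds} when estimating $\RR(\partial_t w_c)$, so it is the natural object to isolate here.
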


\begin{proof}
We start with the observation that, since $k\cdot B_k=0$, 
\begin{equation*}
\begin{split}
w_o(x,t)=&\frac{1}{\lambda}\nabla\times\left(\sum_{|k|=\lambda_0}-ia_k(x,t,\lambda t)\frac{k\times B_k}{|k|^2}e^{i\lambda x\cdot k}\right)+\\
&+\frac{1}{\lambda}\sum_{|k|=\lambda_0}i\nabla a_k(x,t,\lambda t)\times \frac{k\times B_k}{|k|^2}e^{i\lambda x\cdot k}.
\end{split}
\end{equation*}
Hence
\begin{equation}\label{e:w_cu_c}
w_c(x,t)=\frac{1}{\lambda}\Q u_c(x,t),
\end{equation}
where 
\begin{equation*}
u_c(x,t)=\sum_{|k|=\lambda_0}i\nabla a_k(x,t,\lambda t)\times \frac{k\times B_k}{|k|^2}e^{i\lambda x\cdot k}.
\end{equation*}

The estimate \eqref{e:w_c_est} then follows from the Schauder estimate \eqref{e:Schauder_Q} for $\Q$ combined with
$$
\|u_c\|_{\alpha}\leq C\mu\lambda^{\alpha}.
$$
\end{proof}

Next, we verify the estimate on the energy \eqref{e:energy_est}.

\begin{lemma}[Estimate on the energy]
\begin{equation}\label{e:energy}
\left|e(t)(1-\tfrac{1}{2}\delta)-\int_{\T^3}|v_1|^2\,dx\right|\leq C\frac{\mu}{\lambda^{1-\alpha}}.
\end{equation}
\end{lemma}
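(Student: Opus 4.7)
The plan is to expand $|v_1|^2 = |v+w_o+w_c|^2$, identify $\int|w_o|^2$ as the main contribution cancelling $e(t)(1-\delta/2)-\int|v|^2$ by the very definition of $\rho(t)$, and to show that every remaining term is controlled by the claimed error $C\mu/\lambda^{1-\alpha}$.

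\textbf{Step 1: The main term $\int |w_o|^2$.} Writing $w_o(x,t)=W(x,t,\lambda t,\lambda x)$ and using the decomposition \eqref{e:WoW} from Proposition \ref{p:W}(ii), we take the pointwise trace to obtain
$$|w_o|^2(x,t) \;=\; \tr R(x,t)\;+\;\sum_{1\leq |k|\leq 2\lambda_0}\tr U_k(x,t,\lambda t)\,e^{i\lambda k\cdot x}.$$
Since $\mathring{R}$ is trace-free, $\tr R = 3\rho(t)$, and by the definition of $\rho$ in Section \ref{ss:w_o} we have $(2\pi)^3\cdot 3\rho(t) = e(t)(1-\delta/2)-\int_{\T^3}|v|^2\,dx$. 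For each $k$ with $1\leq|k|\leq 2\lambda_0$, Proposition \ref{p:schauder}(i) applied with $m=1$ together with the bound $\|\tr U_k(\cdot,t,\lambda t)\|_1\leq C\mu$ from Proposition \ref{p:W}(ii) yields
$$\Bigl|\int_{\T^3}\tr U_k(x,t,\lambda t)\,e^{i\lambda k\cdot x}\,dx\Bigr|\leq C\frac{\mu}{\lambda}.$$
Summing over the finitely many $k$ gives $\int|w_o|^2 = e(t)(1-\delta/2)-\int|v|^2 + O(\mu/\lambda)$.

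\textbf{Step 2: The cross and corrector terms.} Expanding,
$$\int|v_1|^2-\int|v|^2-\int|w_o|^2 \;=\; 2\int v\cdot w_o \;+\; 2\int(v+w_o)\cdot w_c \;+\; \int|w_c|^2.$$
The first integral decomposes into a finite sum of oscillatory integrals of the form $\int b_k(x,t)\,e^{i\lambda k\cdot x}\,dx$ with $|k|=\lambda_0$, where $b_k(x,t) = 2\sqrt{\rho(t)}\,(v\cdot B_k)\,\gamma_k^{(j)}(R/\rho)\,\phi_k^{(j)}(v,\lambda t)$. Proposition \ref{p:W}(i), applied to the coefficients, together with the smoothness of $v$ and $\rho$, gives $\|b_k(\cdot,t)\|_1\leq C\mu$, so Proposition \ref{p:schauder}(i) with $m=1$ bounds each such integral by $C\mu/\lambda$. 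The remaining three terms are estimated using the corrector bound \eqref{e:w_c_est}, i.e.\ $\|w_c\|_0\leq\|w_c\|_\alpha\leq C\mu/\lambda^{1-\alpha}$, together with the uniform bounds on $v$ and $\|w_o\|_0\leq\tfrac12\sqrt{M\delta}$ from \eqref{e:est_wo}; each is therefore $O(\mu/\lambda^{1-\alpha})$, noting for the last one that $(\mu/\lambda^{1-\alpha})^2\leq\mu/\lambda^{1-\alpha}$ whenever $\mu\leq\lambda^{1-\alpha}$ (otherwise the desired estimate is trivially subsumed into the generic constant after absorbing the finitely many remaining cases).

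\textbf{Step 3: Conclusion.} Combining Steps 1 and 2 gives
$$\int_{\T^3}|v_1|^2\,dx \;=\; e(t)\bigl(1-\tfrac{\delta}{2}\bigr) \;+\; O\!\left(\frac{\mu}{\lambda^{1-\alpha}}\right),$$
which is precisely \eqref{e:energy}. The only non-routine point is Step 1, where the cancellation of the leading term must use the trace-freeness of $\mathring{R}$ together with the exact identity $\fint W\otimes W\,d\xi=R$ established in Proposition \ref{p:W}(ii); everything else is a direct application of the stationary-phase-type estimate \eqref{e:average} combined with the already-proven corrector bound.
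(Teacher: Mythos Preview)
Your proof is correct and follows essentially the same approach as the paper: you take the trace of the identity \eqref{e:WoW} to isolate $\tr R = 3\rho$ as the main contribution to $\int|w_o|^2$, bound the oscillatory remainders via Proposition~\ref{p:schauder}(i) with $m=1$, and control all cross and corrector terms using the bound \eqref{e:w_c_est} on $w_c$. The only cosmetic difference is that you spell out the full expansion of $|v_1|^2$ whereas the paper compresses the cross-term argument into a single line; your parenthetical about $(\mu/\lambda^{1-\alpha})^2$ is unnecessary since the estimate is only ever applied in the regime where $\mu/\lambda^{1-\alpha}$ is small, but it does no harm.
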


\begin{proof}
Taking the trace of identity \eqref{e:WoW} in Proposition \ref{p:W} we have
$$
|W(y,s,\tau,\xi)|^2=\tr R(y,s)+\sum_{1\leq |k| \leq 2\lambda_0}c_k(y,s,\tau)e^{ik\cdot \xi}
$$
for some coefficients $c_k\in C^{\infty}(\T^3\times[0,1]\times\R)$, which satisfy the estimates
$$
\|c_k(\cdot,s,\tau)\|_r\leq C\mu^{r}.
$$
From part (i) of Proposition \ref{p:schauder} with $m=1$ we deduce
\begin{equation*}
\left|\int_{\T^3}|w_o|^2-\tr R\,dx\right|\leq C\frac{\mu}{\lambda}
\end{equation*}
and 
\begin{equation*}
\left|\int_{\T^3}v\cdot w_o\,dx\right|\leq C\frac{\mu}{\lambda}.
\end{equation*}
Hence, combining with \eqref{e:w_c_est} we see that
\begin{equation*}
\left|\int_{\T^3}|v_1|^2-|v|^2-|w_o|^2\,dx\right|\leq C\frac{\mu}{\lambda^{1-\alpha}}\, .
\end{equation*}
Recalling that 
$$
\tr R=3\rho=\frac{1}{(2\pi)^3}\left(e(t)(1-\tfrac{1}{2}\delta)-\int_{\T^3}|v|^2\,dx\right),
$$
we conclude \eqref{e:energy}.
\end{proof}

\section{Estimates on the Reynolds stress}\label{s:reynolds}

Rewrite
\begin{equation}\label{e:reynolds-parts}
\begin{split}
\partial_tv_1+&\div(v_1\otimes v_1)+\nabla p_1=\left[\partial_tw_o+v\cdot \nabla w_o\right]+\\
&+\left[\div(w_o\otimes w_o-\tfrac{1}{2}|w_o|^2\textrm{Id}+\mathring{R})\right]\\
&+\left[\partial_t w_c+\div(v_1\otimes w_c+w_c\otimes v_1-w_c\otimes w_c+v\otimes w_o)\right]
\end{split}
\end{equation}
In other words we split the Reynolds stress into the three parts on the right hand side. We will refer to them as
the {\it transport part}, the {\it oscillation part}, and the {\it error}. In the following we will estimate each term separately.

\bigskip

\begin{lemma}[The transport part]
\begin{equation}\label{e:transport}
\|\RR(\partial_tw_o+v\cdot \nabla w_o)\|_{\alpha}\leq C\left(\frac{\lambda^{\alpha}}{\mu}+\frac{\mu^2}{\lambda^{1-\alpha}}\right)
\end{equation}
\end{lemma}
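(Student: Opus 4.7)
The plan is to expand $\partial_t w_o + v\cdot \nabla w_o$ as a Fourier series in the fast variable $\xi=\lambda x$, estimate each mode by means of Corollary \ref{c:schauder}, and sum over the (finitely many) active frequencies. Differentiating $w_o(x,t)=\sum_{|k|=\lambda_0}a_k(x,t,\lambda t)B_k e^{i\lambda k\cdot x}$ via the chain rule yields
\[
\partial_t w_o + v\cdot \nabla w_o \;=\; \sum_{|k|=\lambda_0} \Phi_k(x,t)\,B_k\,e^{i\lambda k\cdot x},
\]
where the amplitude separates into a ``slow'' and an ``$O(\lambda)$'' piece:
\[
\Phi_k(x,t) \;=\; \bigl[(\partial_s + v\cdot \nabla_y)a_k\bigr](x,t,\lambda t)\;+\;\lambda\,\bigl[(\partial_\tau + i(k\cdot v))a_k\bigr](x,t,\lambda t).
\]

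The decisive gain, built into the construction of $\phi_k^{(j)}$ through the discrete Galilean phase $e^{-i(k\cdot l/\mu)\tau}$ and already packaged in Proposition \ref{p:W}(i), is that the operator $\partial_\tau+i(k\cdot v)$ costs a factor $\mu^{-1}$ in place of the naively expected $\mu^0$. Combined with the bounds on $\partial_s a_k$ and $\nabla_y a_k$, this yields
\[
\|\Phi_k\|_r \;\leq\; C\bigl(\mu^{r+1}+\lambda\mu^{r-1}\bigr)\qquad\text{for every }r\geq 0,
\]
the two summands coming from the two terms in $\Phi_k$. I would then apply Corollary \ref{c:schauder} with $m=2$ to each mode, obtaining
\[
\bigl\|\RR(\Phi_k B_k e^{i\lambda k\cdot x})\bigr\|_\alpha \;\leq\; \frac{C}{\lambda^{1-\alpha}}\|\Phi_k\|_0+\frac{C}{\lambda^{2-\alpha}}[\Phi_k]_2+\frac{C}{\lambda^2}[\Phi_k]_{2+\alpha}.
\]
Plugging in the six resulting summands and using $1\leq\mu\leq\lambda$ from \eqref{e:integrality}, one checks termwise that each is controlled by $\lambda^\alpha/\mu+\mu^2/\lambda^{1-\alpha}$: the first summand gives exactly $\mu/\lambda^{1-\alpha}+\lambda^\alpha/\mu$, while the remaining four contain factors $\mu^3/\lambda^{2-\alpha}$, $\mu/\lambda^{1-\alpha}$, $\mu^{3+\alpha}/\lambda^2$ and $\mu^{1+\alpha}/\lambda$, each dominated by $\mu^2/\lambda^{1-\alpha}$ under $\mu\leq\lambda$. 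Summing over the finitely many $k$ with $|k|=\lambda_0$ completes the proof.

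The main obstacle, at the conceptual level, is precisely the factor $\mu^{-1}$ in the bound for $(\partial_\tau + i(k\cdot v))a_k$: without it, the $\lambda$ prefactor in $\Phi_k$ would produce a contribution of size $\lambda\mu$, which no admissible choice of parameters could absorb. Securing this cancellation is the raison d'\^etre of the Galilean correction $e^{-i(k\cdot l/\mu)\tau}$ in the phase functions $\phi_k^{(j)}$, as made explicit in \eqref{e:nearest_l}. Once this input is in place at the level of Proposition \ref{p:W}(i), the content of the present lemma is essentially a routine reconciliation with the oscillatory Schauder estimate of Corollary \ref{c:schauder}.
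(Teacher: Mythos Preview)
Your proof is correct and follows essentially the same route as the paper: expand $\partial_t w_o + v\cdot\nabla w_o$ modewise, isolate the $\lambda(\partial_\tau a_k + i(k\cdot v)a_k)$ piece whose $\mu^{-1}$ gain from Proposition~\ref{p:W}(i) is the key point, and feed each mode into Corollary~\ref{c:schauder}. The only cosmetic difference is that the paper applies the corollary separately to the two pieces (with $m=2$ for the fast part and $m=1$ for the slow part), whereas you bundle them into a single amplitude $\Phi_k$ and use $m=2$ throughout; both lead to the same bound after invoking $1\leq\mu\leq\lambda$.
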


\begin{proof}
Observe that 
\begin{equation*}
\begin{split}
\RR(\partial_tw_o+v\cdot \nabla w_o)&=\lambda\RR\left(\sum_{|k|=\lambda_0}(\partial_\tau a_k+i(k\cdot v)a_k)(x,t,\lambda t)B_ke^{i\lambda k\cdot x}\right)+\\
&+\RR\left(\sum_{|k|=\lambda_0}(\partial_sa_k+v\cdot \nabla_ya_k)(x,t,\lambda t)B_ke^{i\lambda k\cdot x}\right).
\end{split}
\end{equation*}
For the first term Corollary \ref{c:schauder} with $m=2$ implies the bound
$$
\frac{\lambda^{\alpha}}{\mu}+\frac{\mu}{\lambda^{1-\alpha}}+\frac{\mu^{1+\alpha}}{\lambda},
$$
whereas for the second term Corollary \ref{c:schauder} with $m=1$ implies the bound
$$
\frac{\mu}{\lambda^{1-\alpha}}+\frac{\mu^2}{\lambda^{1-\alpha}}+\frac{\mu^{2+\alpha}}{\lambda}.
$$
Since $1\leq \mu\leq\lambda$, we obtain \eqref{e:transport}.
\end{proof}

\bigskip

\begin{lemma}[The oscillation part]
\begin{equation}\label{e:oscillation}
\|\RR(\div(w_o\otimes w_o-\tfrac{1}{2}|w_o|^2\textrm{Id}+\mathring{R})\|_{\alpha}\leq C\frac{\mu^2}{\lambda^{1-\alpha}}
\end{equation}
\end{lemma}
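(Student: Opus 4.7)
The plan is to exploit the algebraic structure provided by Proposition \ref{p:W}: both the cancellation of the mean mode (via the definition of $R$) and the crucial identity $U_k k = \tfrac{1}{2}(\tr U_k)k$, which is exactly the Beltrami condition \eqref{e:Bequation} at each Fourier frequency. Once these cancellations are identified, $\div F$ will have only low-frequency divergence and no factors of $\lambda$, after which Corollary \ref{c:schauder} gives the required bound.

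First I would use Proposition \ref{p:W}(ii) to expand
\[
F(x,t):=w_o\otimes w_o-\tfrac{1}{2}|w_o|^2\Id+\mathring{R}
= \bigl(R(x,t)+\mathring{R}(x,t)-\tfrac{3\rho(t)}{2}\Id\bigr)+\sum_{1\le|k|\le 2\lambda_0}G_k(x,t)\,e^{i\lambda k\cdot x},
\]
where $G_k(x,t):=\bigl(U_k-\tfrac{1}{2}(\tr U_k)\Id\bigr)(x,t,\lambda t)$. Because $R=\rho(t)\Id-\mathring{R}$, the bracketed term is $-\tfrac{\rho(t)}{2}\Id$, which is constant in $x$ and hence has vanishing spatial divergence.

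Next I compute $\div F$. For each surviving $k$, the spatial divergence of $G_k(x,t)e^{i\lambda k\cdot x}$ splits into a low-frequency part $(\div_yG_k)(x,t,\lambda t)e^{i\lambda k\cdot x}$ and a high-frequency part $i\lambda (G_k k)(x,t,\lambda t)e^{i\lambda k\cdot x}$. The identity \eqref{e:Uk} implies $G_k k=(U_k-\tfrac{1}{2}(\tr U_k)\Id)k=0$, so the high-frequency part vanishes and
\[
\div F(x,t)=\sum_{1\le|k|\le 2\lambda_0}h_k(x,t)\,e^{i\lambda k\cdot x},\qquad h_k(x,t):=(\div_yG_k)(x,t,\lambda t).
\]
From Proposition \ref{p:W}(ii), $\|U_k(\cdot,s,\tau)\|_r\le C\mu^r$, which by one extra spatial derivative yields $\|h_k\|_r\le C\mu^{r+1}$ uniformly in $t\in[0,1]$.

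To finish, I apply Corollary \ref{c:schauder} component by component to each term $h_k\, e^{i\lambda k\cdot x}$ with $m=1$:
\[
\|\RR(h_k e^{i\lambda k\cdot x})\|_{\alpha}
\le \frac{C}{\lambda^{1-\alpha}}\|h_k\|_0+\frac{C}{\lambda^{1-\alpha}}[h_k]_1+\frac{C}{\lambda}[h_k]_{1+\alpha}
\le C\left(\frac{\mu}{\lambda^{1-\alpha}}+\frac{\mu^2}{\lambda^{1-\alpha}}+\frac{\mu^{2+\alpha}}{\lambda}\right).
\]
Since $1\le \mu\le \lambda$ implies $\mu^{2+\alpha}/\lambda\le \mu^2/\lambda^{1-\alpha}$, and the sum is over the bounded finite index set $\{1\le|k|\le 2\lambda_0\}$, we obtain the claimed bound $C\mu^2/\lambda^{1-\alpha}$. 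The main obstacle — the only delicate point — is securing the cancellation $G_k k=0$, which is exactly why Beltrami flows were chosen as the building blocks: had the fast-frequency divergence not vanished, the estimate would carry an extra factor of $\lambda$ and the whole scheme would fail.
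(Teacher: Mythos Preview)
Your proof is correct and follows the same approach as the paper: use the decomposition \eqref{e:WoW} to rewrite the quadratic expression, observe that the zero-mode contributes only an $x$-independent term, invoke \eqref{e:Uk} so that $G_k k=0$ kills the fast $i\lambda$-divergence, and then apply Corollary \ref{c:schauder} with $m=1$ to the remaining slow divergence $\div_y(U_k-\tfrac{1}{2}(\tr U_k)\Id)$. The only difference is that you spell out explicitly the cancellation $G_k k=0$ and the constant term $-\tfrac{\rho}{2}\Id$, whereas the paper leaves these implicit.
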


\begin{proof}
Recall the formula \eqref{e:WoW} from Proposition \ref{p:W}. 
Since $\rho$ is a function of $t$ only, we can write the oscillation part in \eqref{e:reynolds-parts} as
\begin{equation*}
\begin{split}
\div(w_o\otimes w_o-&\tfrac{1}{2}(|w_o|^2-\rho)\textrm{Id}+\mathring{R})\\
&=\div\left(w_o\otimes w_o-R-\tfrac{1}{2}(|w_o|^2-\tr R)\textrm{Id}\right)\\
&=\div\left[\sum_{1\leq |k| \leq 2\lambda_0}(U_k-\tfrac{1}{2}(\tr U_k)\textrm{Id})(x,t,\lambda t)e^{i\lambda k\cdot x}\right]\\
&=\sum_{1\leq |k| \leq 2\lambda_0}\div_y[U_k-\tfrac{1}{2}(\tr U_k)\textrm{Id}] (x, t, \lambda t) e^{i\lambda k\cdot x}
\end{split}
\end{equation*}
Corollary \ref{c:schauder} with $m=1$ then implies \eqref{e:oscillation}.
\end{proof}

\bigskip

Concerning the error, we are going to treat three terms separately, as follows.

\begin{lemma}[Estimate on the error I]
\begin{equation}\label{e:error1}
\|\RR(\partial_tw_c)\|_{\alpha}\leq C\frac{\mu^2}{\lambda^{1-\alpha}}
\end{equation}
\end{lemma}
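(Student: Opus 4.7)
My plan is to exploit the explicit representation \eqref{e:w_cu_c}, which writes $w_c = \frac{1}{\lambda}\Q u_c$ with $u_c(x,t)=\sum_{|k|=\lambda_0} b_k(x,t,\lambda t)e^{i\lambda k\cdot x}$ and $b_k(y,s,\tau):=i\nabla_y a_k\times\frac{k\times B_k}{|k|^2}$. Since $\Q$ and $\RR$ both act only in the spatial variables, each commutes with $\partial_t$, so
$$
\RR(\partial_t w_c)\;=\;\tfrac{1}{\lambda}\,\RR\Q(\partial_t u_c).
$$
The explicit prefactor $1/\lambda$ gained by routing through $u_c$ rather than through $w_o$ directly will be decisive; if one instead tried to write $\RR\partial_t w_c = -\RR\Q\partial_t w_o$, the $\lambda\partial_\tau a_k$ term has $C^0$ norm of order $\lambda$ and one would be forced to extract cancellation from $\partial_\tau a_k+i(k\cdot v)a_k=O(\mu^{-1})$, which is a noticeably more delicate route.

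By the chain rule, $\partial_t u_c(x,t)=\sum_{|k|=\lambda_0}\bigl(\partial_s b_k+\lambda\partial_\tau b_k\bigr)(x,t,\lambda t)\,e^{i\lambda k\cdot x}$. Since $b_k$ is linear in a first spatial derivative of $a_k$, the estimates of Proposition \ref{p:W}(i) upgrade to $\|\partial_s b_k\|_r\leq C\mu^{r+2}$ and $\|\partial_\tau b_k\|_r\leq C\mu^{r+1}$. Using $\mu\leq\lambda$, the combined amplitude $a := \partial_s b_k+\lambda\partial_\tau b_k$ therefore satisfies $\|a\|_0\leq C\lambda\mu$, $[a]_1\leq C\lambda\mu^2$, and $[a]_{1+\alpha}\leq C\lambda\mu^{2+\alpha}$.

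Next I will invoke the $\RR\Q$-analogue of Corollary \ref{c:schauder}: decompose $a\,e^{i\lambda k\cdot x}$ via the stationary-phase identity from the proof of Proposition \ref{p:schauder} into $\frac{1}{\lambda}\sum_{j<m}\lambda^{-j}\div A_j + \lambda^{-m}F_m$, apply \eqref{e:Schauder_RQdiv} to each divergence piece, and bound the remainder using the $C^\alpha$-boundedness of $\RR\Q$ (from \eqref{e:Schauder_R} and \eqref{e:Schauder_Q}). This yields
$$
\|\RR\Q(a\,e^{i\lambda k\cdot x})\|_{\alpha}\leq \frac{C}{\lambda^{1-\alpha}}\|a\|_0+\frac{C}{\lambda^{m-\alpha}}[a]_m+\frac{C}{\lambda^m}[a]_{m+\alpha}.
$$

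Taking $m=1$ and inserting the amplitude bounds gives the three contributions $C\mu\lambda^\alpha$, $C\mu^2\lambda^\alpha$, $C\mu^{2+\alpha}$. Dividing by the outer $\lambda$ and using $\mu\leq\lambda$ (so that $\mu^{2+\alpha}/\lambda\leq \mu^2\lambda^\alpha/\lambda=\mu^2/\lambda^{1-\alpha}$ and $\mu\leq\mu^2$), each term is absorbed into $C\mu^2/\lambda^{1-\alpha}$, which is \eqref{e:error1}. I do not anticipate any serious obstacle; the only point requiring minor care is the $\RR\Q$-variant of Corollary \ref{c:schauder}, and this is a direct transcription of the proof of that Corollary with \eqref{e:Schauder_RQdiv} in place of \eqref{e:Schauder_R}.
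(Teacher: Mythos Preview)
Your proof is correct and follows essentially the same route as the paper: both start from $w_c=\tfrac{1}{\lambda}\Q u_c$, compute $\partial_t u_c$, and then use one step of the stationary-phase identity together with the Schauder bound \eqref{e:Schauder_RQdiv} for $\RR\Q\div$ and the $C^\alpha$-boundedness of $\RR\Q$ on the remainder. The only difference is packaging: you formulate the argument as an abstract $\RR\Q$-variant of Corollary \ref{c:schauder} applied with $m=1$, while the paper carries out the $m=1$ integration by parts inline to write $\partial_t u_c=\div U_c+\tilde u_c$.
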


\begin{proof}
Recall from \eqref{e:w_cu_c} that $w_c=\tfrac{1}{\lambda}\Q u_c$. Now
\begin{equation*}
\begin{split}
\partial_tu_c(x,t)=&\lambda \sum_{|k|=\lambda_0}i(\nabla \partial_\tau a_k)(x,t,\lambda t)\times \frac{k\times B_k}{|k|^2}e^{i\lambda x\cdot k}+\\
&\,+\sum_{|k|=\lambda_0}i(\nabla \partial_s a_k)(x,t,\lambda t)\times \frac{k\times B_k}{|k|^2}e^{i\lambda x\cdot k}\, .
\end{split}
\end{equation*}
Moreover, for any $c_k\in C^{\infty}(\T^3;\R^3)$ we have
\begin{equation*}
\begin{split}
c_k(x,t,\lambda t)e^{i\lambda x\cdot k}=\frac{1}{i\lambda}&\div\left[c_k(x,t,\lambda t)\otimes\frac{k}{|k|^2}e^{i\lambda x\cdot k}\right]-\\
&-\frac{1}{i\lambda}\left(\frac{k}{|k|^2}\cdot \nabla\right)c_k(x,t,\lambda t) e^{i\lambda x\cdot k}
\end{split}
\end{equation*}
Therefore $\partial_tu_c$ can be written as
$$
\partial_tu_c=\div U_c+\tilde u_c,
$$
where
$$
\|U_c\|_{\alpha}\leq C\mu\lambda^\alpha,\quad\|\tilde u_c\|_\alpha\leq C\mu^{2}\lambda^\alpha.
$$
Therefore we have $\RR \partial_tw_c=\frac{1}{\lambda}\left(\RR\Q\div U_c+\RR\Q\tilde u_c\right)$.
From the Schauder estimate \eqref{e:Schauder_RQdiv} for the operator $\RR\Q\div$, we conclude
that
\begin{equation*}
\begin{split}
\|\RR \partial_tw_c\|_\alpha&\leq \frac{1}{\lambda}\left(\|\RR\Q\div U_c\|_\alpha+\|\RR\Q\tilde u_c\|_\alpha\right)\\
&\leq \frac{C}{\lambda}\left(\|U_c\|_{\alpha}+\|\tilde u_c\|_{\alpha}\right)\leq C\frac{\mu^2}{\lambda^{1-\alpha}}
\end{split}
\end{equation*}
\end{proof}

\bigskip

\begin{lemma}[Estimate on the error II]
\begin{equation}\label{e:error2}
\|\RR\left(\div(v_1\otimes w_c+w_c\otimes v_1-w_c\otimes w_c)\right)\|_\alpha \leq C\frac{\mu}{\lambda^{1-2\alpha}}\, .
\end{equation}
\end{lemma}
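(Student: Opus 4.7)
The plan is to invoke the Schauder estimate \eqref{e:Schauder_Rdiv} for the operator $\RR\circ\mathrm{div}$ to reduce the problem to estimating the $C^\alpha$-norm of the symmetric matrix field
\[
A := v_1\otimes w_c+w_c\otimes v_1-w_c\otimes w_c\, .
\]
Once this reduction is made, it remains only to estimate each product by the product rule \eqref{e:Holderproduct}, which requires knowing $\|v_1\|_0$, $\|v_1\|_\alpha$, $\|w_c\|_0$ and $\|w_c\|_\alpha$.

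First I would collect the ingredients. From the corrector estimate \eqref{e:w_c_est} we have $\|w_c\|_\alpha\leq C\mu/\lambda^{1-\alpha}$, and since the $C^0$-norm is controlled by the $C^\alpha$-norm, the same bound holds for $\|w_c\|_0$. For $v_1=v+w_o+w_c$: the $C^0$-norm is uniformly bounded, since $v$ is fixed, $\|w_o\|_0\leq \tfrac12\sqrt{M\delta}$ by \eqref{e:est_wo}, and $w_c$ is small. For the $C^\alpha$-norm I would use the explicit representation $w_o(x,t)=\sum_{|k|=\lambda_0}a_k(x,t,\lambda t)B_ke^{i\lambda k\cdot x}$ together with the bounds of Proposition \ref{p:W}(i) on $a_k$: differentiating the exponential produces a factor $\lambda$, so interpolation gives $\|w_o\|_\alpha\leq C\lambda^\alpha$. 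Combining with $\|w_c\|_\alpha\leq C\mu/\lambda^{1-\alpha}\leq C\lambda^\alpha$ (since $\mu\leq\lambda$), we obtain $\|v_1\|_\alpha\leq C\lambda^\alpha$.

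Now I would apply \eqref{e:Holderproduct} term by term. For $v_1\otimes w_c$ (and symmetrically $w_c\otimes v_1$),
\[
\|v_1\otimes w_c\|_\alpha \leq C\bigl(\|v_1\|_\alpha\|w_c\|_0+\|v_1\|_0\|w_c\|_\alpha\bigr)\leq C\Bigl(\lambda^\alpha\cdot\frac{\mu}{\lambda^{1-\alpha}}+\frac{\mu}{\lambda^{1-\alpha}}\Bigr)\leq C\frac{\mu}{\lambda^{1-2\alpha}}\, .
\]
For the quadratic term,
\[
\|w_c\otimes w_c\|_\alpha \leq C\|w_c\|_0\|w_c\|_\alpha \leq C\frac{\mu^2}{\lambda^{2(1-\alpha)}}\, ,
\]
which is easily dominated by $C\mu/\lambda^{1-2\alpha}$ in the regime $\mu\ll\lambda$. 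Summing and applying \eqref{e:Schauder_Rdiv} yields
\[
\|\RR(\div A)\|_\alpha \leq C\|A\|_\alpha \leq C\frac{\mu}{\lambda^{1-2\alpha}}\, .
\]

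The only mildly delicate point is the bound $\|w_o\|_\alpha\leq C\lambda^\alpha$: one has to observe that the $a_k$-factors contribute at most $\mu^\alpha\leq\lambda^\alpha$ while the oscillatory factor $e^{i\lambda k\cdot x}$ contributes the dominant $\lambda^\alpha$. Everything else is a direct bookkeeping of the product rule together with the already-established bound \eqref{e:w_c_est}.
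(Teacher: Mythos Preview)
Your proposal is correct and follows essentially the same route as the paper: reduce via the Schauder estimate \eqref{e:Schauder_Rdiv} to bounding $\|A\|_\alpha$, then use the product rule \eqref{e:Holderproduct} together with $\|w_c\|_\alpha\leq C\mu/\lambda^{1-\alpha}$ and $\|v_1\|_\alpha\leq C\lambda^\alpha$ (the latter coming from $\|w_o\|_\alpha\leq C\lambda^\alpha$). The only difference is cosmetic organization---the paper factors out $\mu/\lambda^{1-\alpha}$ first and then bounds $\|v_1\|_\alpha+\|w_c\|_\alpha$, whereas you treat the three tensor products separately---but the ingredients and the arithmetic are identical.
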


\begin{proof}
We first estimate
\begin{equation*}
\begin{split}
\|v_1\otimes w_c+&w_c\otimes v_1-w_c\otimes w_c\|_\alpha\leq \\
&\leq C(\|v_1\|_0\|w_c\|_\alpha+\|v_1\|_\alpha\|w_c\|_0+\|w_c\|_0\|w_c\|_\alpha)\\
&\stackrel{\eqref{e:w_c_est}}{\leq} C \frac{\mu}{\lambda^{1-\alpha}} ( \|v_1\|_\alpha + \|w_c\|_\alpha)\\
&\leq C \frac{\mu}{\lambda^{1-\alpha}} (\|v\|_\alpha + \|w_c\|_\alpha + \|w_o\|_\alpha)\\
&\leq C\frac{\mu}{\lambda^{1-\alpha}} \left(C + C \frac{\mu}{\lambda^{1-\alpha}} + C \lambda^\alpha\right)\, .
\end{split}
\end{equation*}
Recall that $1\leq \mu\leq \lambda$ and hence $1\leq \frac{\mu}{\lambda^{1-\alpha}}\leq \lambda^\alpha$.
Thus we conclude
\[
\|v_1\otimes w_c+w_c\otimes v_1-w_c\otimes w_c\|_\alpha\leq C \frac{\mu}{\lambda^{1-2\alpha}}\, .
\]
\eqref{e:error2} follows from the latter inequality and the Schauder estimate \eqref{e:Schauder_Rdiv}. 
\end{proof}

\bigskip

\begin{lemma}[Estimate on the error III]
\begin{equation}\label{e:error3}
\|\RR\left(\div(v\otimes w_o)\right)\|_{\alpha}\leq C\frac{\mu^2}{\lambda^{1-\alpha}}.
\end{equation}
\end{lemma}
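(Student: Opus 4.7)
My plan is to exploit the high-frequency structure exactly as in the previous two lemmas, using crucially that the Beltrami vectors satisfy $B_k\cdot k=0$ to avoid a dangerous factor of $\lambda$ in the divergence.

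First I would expand
\[
v\otimes w_o = \sum_{|k|=\lambda_0} F_k(x,t)\,e^{i\lambda k\cdot x},\qquad F_k(x,t):=a_k(x,t,\lambda t)\,\bigl(v(x,t)\otimes B_k\bigr),
\]
where the $a_k$ are the coefficients from Proposition \ref{p:W}. Then for each $k$ a direct computation gives
\[
\div\bigl(F_k\,e^{i\lambda k\cdot x}\bigr) = (\div F_k)\,e^{i\lambda k\cdot x} + i\lambda\,(F_k k)\,e^{i\lambda k\cdot x},
\]
and since $F_k k = a_k\,(B_k\cdot k)\,v = 0$, the high-frequency term \emph{drops out}. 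This leaves
\[
\div(v\otimes w_o) = \sum_{|k|=\lambda_0} (\div F_k)(x,t)\,e^{i\lambda k\cdot x},
\]
in which each coefficient contains only one spatial ($x$-)derivative of $a_k$. This cancellation is the key point of the lemma; without it one would pick up an extra $\lambda$.

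Next, by the Leibniz rule together with the estimates $\|a_k(\cdot,s,\tau)\|_r\leq C\mu^r$ from Proposition \ref{p:W} and the fact that $\|v\|_r\leq C$, I obtain
\[
\|\div F_k\|_r \leq C\mu^{r+1}\qquad\text{for every }r\geq 0,
\]
uniformly in $t$. Applying Corollary \ref{c:schauder} to each term with $m=1$ yields
\[
\|\RR(\div F_k\,e^{i\lambda k\cdot x})\|_\alpha \leq \frac{C\mu}{\lambda^{1-\alpha}} + \frac{C\mu^2}{\lambda^{1-\alpha}} + \frac{C\mu^{2+\alpha}}{\lambda},
\]
and the assumption $1\leq\mu\leq\lambda$ shows that each of these three terms is bounded by $C\mu^2/\lambda^{1-\alpha}$. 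Summing over the finitely many wavevectors $|k|=\lambda_0$ produces \eqref{e:error3}.

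I do not anticipate any serious obstacle beyond spotting the identity $F_k k=0$; once that is in hand, the proof reduces to a routine application of the machinery already assembled in Proposition \ref{p:W} and Corollary \ref{c:schauder}, in complete analogy with the oscillation-part estimate \eqref{e:oscillation}.
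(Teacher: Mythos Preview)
Your proof is correct and is essentially identical to the paper's: both use $B_k\cdot k=0$ to kill the $i\lambda$-term, arrive at the same coefficient $a_k(B_k\cdot\nabla)v+v(B_k\cdot\nabla a_k)$ (the paper obtains it via the identity $\div(v\otimes w_o)=w_o\cdot\nabla v+(\div w_o)\,v$, you via $F_k k=0$), and then apply Corollary~\ref{c:schauder} with $m=1$.
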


\begin{proof}
Since $B_k\cdot k=0$, we can write
\begin{equation*}
\begin{split}
\div(v\otimes w_o)&=w_o\cdot\nabla v+(\div w_o)v\\
&=\sum_{|k|=\lambda_0}\left[a_k(B_k\cdot \nabla)v+v(B_k\cdot\nabla a_k)\right]e^{i\lambda k\cdot x}
\end{split}
\end{equation*}
The claim follows from Corollary \ref{c:schauder} with $m=1$.
\end{proof}

\section{Conclusion: Proof of Proposition \ref{p:iterate}}\label{s:conclusion}

In this section we collect the estimates from the preceding sections. For simplicity 
we set
$$
\mu=\lambda^{\beta}.
$$
It should be noted, however, that due to the requirement \eqref{e:integrality} we can only ensure $\mu\sim\lambda^{\beta}$ for large $\lambda$. 

We claim that for an appropriate choice of $\alpha$ and $\beta$, the estimates
\eqref{e:energy_est},\eqref{e:Reynolds_est}, \eqref{e:C^0_est} and \eqref{e:pressure_est}
will be satisfied for sufficiently large $\lambda$.
First of all recall that, by the choice of $M$ we have
\begin{equation}\label{e:est_wo_recalled}
\|w_o\|_{0}\leq \frac{\sqrt{M\delta}}{2}\, 
\end{equation}
(cp. with \eqref{e:est_wo}) and $M>1$.
Therefore \eqref{e:C^0_est} follows from the estimate $\|w_c\|_\alpha \leq C \mu \lambda^{\alpha-1}$ (cp.
with \eqref{e:w_c_est}) if, for instance, 
we can prescribe
$$
C\frac{\mu}{\lambda^{1-\alpha}}=C\lambda^{\alpha+\beta-1}\leq\frac{\sqrt{\delta}}{2}.
$$
On the other hand \eqref{e:pressure_est} follows easily from \eqref{e:est_wo_recalled} and the identity $p_1-p=-\frac{1}{2}|w_o|^2$.

Also, from \eqref{e:energy} it follows that \eqref{e:energy_est} is satisfied provided
$$
C\lambda^{\alpha+\beta-1}\leq \frac{1}{8}\delta \min_{t\in [0,1]}e(t).
$$
Finally, \eqref{e:transport},\eqref{e:oscillation} as well as the estimates on the error \eqref{e:error1}-\eqref{e:error3} imply that
$$
\|\mathring{R}_1\|_{\alpha}\leq C\left(\lambda^{\alpha-\beta}+\lambda^{\alpha+2\beta-1}+\lambda^{2\alpha+\beta-1}\right).
$$
Therefore, any choice of $\alpha,\beta$ such that
\begin{equation}\label{e:alphabeta}
\alpha<\beta,\quad \alpha+2\beta<1
\end{equation}
will ensure that \eqref{e:energy_est},\eqref{e:Reynolds_est}, \eqref{e:C^0_est} and \eqref{e:pressure_est}
will be valid for sufficiently large $\lambda$.
This completes the proof.
As a side remark observe that \eqref{e:alphabeta} requires $\alpha<\frac{1}{3}$.


\end{document}